\numberwithin{equation}{section}
\newtheorem{lemma}{Lemma}
\newtheorem{theorem}{Theorem}[section]
\newcommand{\xd}{\textrm{d}}
\newcommand{\bB}[0]{B}
\newcommand{\ep}[0]{\varepsilon}
\newcommand{\embed}{\hookrightarrow}
\def\wstarto{\stackrel{*}{\rightharpoonup}}
\title[Nonlocal-to-local convergence of CH equations]{Nonlocal-to-local convergence of Cahn-Hilliard equations: 
Neumann boundary conditions\\
and viscosity terms}
\author[Elisa Davoli]{Elisa Davoli}
\address{Institut f\"ur Mathematik, University of Vienna, Oskar-Morgenstern-Platz 1, 1090 Vienna, Austria}
\email{elisa.davoli@univie.ac.at}
\author[Luca Scarpa]{Luca Scarpa}
\address{Institut f\"ur Mathematik, University of Vienna, Oskar-Morgenstern-Platz 1, 1090 Vienna, Austria}
\email{luca.scarpa@univie.ac.at}
\author[Lara Trussardi]{Lara Trussardi}
\address{Institut f\"ur Mathematik, University of Vienna, Oskar-Morgenstern-Platz 1, 1090 Vienna, Austria}
\email{lara.trussardi@univie.ac.at}
\keywords{Nonlocal Cahn-Hilliard equation, 
singular potential, singular kernel, regularity,
well-posedness, nonlocal-to-local convergence, 
Neumann boundary conditions}
\subjclass[2010]{45K05, 35K25, 35K55, 35B40, 76R05}
\begin{document}

\begin{abstract}
We consider
a class of
nonlocal viscous Cahn-Hilliard equations
with Neumann boundary conditions for the chemical potential.
The double-well potential 
is allowed to be singular 
(e.g.~of logarithmic type),
while the singularity of the
convolution kernel 
does not fall in
any available existence theory
under Neumann boundary 
conditions. 
We prove
well-posedness for the nonlocal equation
in a suitable variational sense.
Secondly, we show that 
the solutions to the 
nonlocal equation 
converge to the corresponding
solutions to the local equation, as the 
convolution kernels approximate a Dirac delta.
The asymptotic behaviour is analyzed 
by means of monotone analysis and 
Gamma convergence results,
both when the limiting local Cahn-Hilliard 
equation is of viscous type and of pure type.
\end{abstract}

\maketitle

\tableofcontents

\section{Introduction}
\label{sec:intro}

The aim of the present paper is to 
study the well-posedness and the asymptotic behaviour
as $\ep\searrow0$ of a family of nonlocal viscous Cahn-Hilliard equations with Neumann boundary conditions 
in the following form:
\begin{align}
    \label{eq1:NL}
    \partial_t u_\varepsilon - \Delta \mu_\varepsilon =0
    \qquad&\text{in } (0,T)\times\Omega\,,\\
    \label{eq2:NL}
    \mu_\varepsilon = \tau_\ep \partial_t u_\ep +(K_\varepsilon*1)u_\varepsilon -K_\varepsilon*u_\varepsilon +\Psi'(u_\varepsilon)
    -g_\ep \qquad&\text{in } (0,T)\times\Omega\,,\\
    \label{eq:2bis-Neumann}
    \partial_{\bf n} \mu_\ep=0\qquad&\text{on } (0,T)\times\partial\Omega\,,\\
    \label{eq3:NL}
    u_\ep(0)=u_{0,\ep} \qquad&\text{in } \Omega\,,
\end{align}
where $\Omega$ is a smooth bounded domain in $\mathbb{R}^d$
($d=2,3$), $T>0$ is a fixed final time, and
$\Psi'$ represents the derivative a 
double-well potential.
Moreover, $\ep>0$ is a fixed parameter,
$\tau_\ep>0$ is a
positive viscosity coefficient,
$K_\ep:\Omega\times\Omega\to\mathbb{R}$ 
is a suitable symmetric convolution kernel, and 
$g_\ep$ represents a distributed forcing term.
The variables $u_\ep$ and $\mu_\ep$ are
referred to as ``order parameter''
and ``chemical potential'', respectively.

The evolution problem \eqref{eq1:NL}--\eqref{eq3:NL}
is related to 
the gradient flow (in the $H^{-1}$-metric)
associated to a nonlocal free energy functional of the form
\begin{align} \label{eq:enfunGL}
\mathcal E_{\ep}(\varphi)&=\frac{1}{4}
\int_{\Omega}\int_{\Omega}K_\ep(x,y)
|\varphi(x)-\varphi(y)|^{2}\,\xd x\,\xd y
+ \int_{\Omega}\Psi(\varphi(x))\,\xd x\,.
\end{align}
Indeed, the contributions 
$(K_\ep*1) u_\ep - K_\ep*u_\ep + \Psi'(u_\ep)$ in the definition
of the chemical potential are obtained
exactly from the (sub)differentiation of the 
functional \eqref{eq:enfunGL}. The 
extra term $\tau_\ep\partial_t u_\ep$
represents on the other side a viscosity 
regularization, acting on the dissipation of the system.

The analysis of nonlocal models dates back to 
the early $90$'s, when
G.~Giacomin and J.~Lebowitz
investigated, in their seminal paper~\cite{GL},
a hydrodynamic limit of a microscopic model 
for a $d$-dimensional lattice gas
evolving via a Poisson nearest-neighbor process.
In that work, the authors derived 
a free energy functional in 
nonlocal form \eqref{eq:enfunGL},
and proposed the corresponding 
gradient flow to model phase-change in 
binary alloys. The viscous 
regularization in the definition of the chemical 
potential was originally introduced 
in the context of the local Cahn-Hilliard equation
by Novick-Cohen in \cite{novick-cohen}.
The mathematical literature on the nonlocal 
Cahn-Hilliard equation is widely developed:
we can mention, among many others, 
the contributions
\cite{ab-bos-grass-NLCH,bat-han-NLCH,gal-gior-grass-NLCH,
gal-grass-NLCH,han-NLCH}
and the references therein.\\

The rapidly growing attention 
to the 
nonlocal Cahn-Hilliard equation 
is due on the one hand to
its microscopic justification,
and on the other hand to 
its connection with the corresponding local
model. Indeed,
at least in a formal way, 
the nonlocal dynamics
approach the local ones
when the family of interaction kernels 
$(K_\ep)_\ep$ concentrates around the origin.
The main issue we assess in this paper is
the asymptotic convergence
of solutions to the nonlocal system
\eqref{eq1:NL}--\eqref{eq3:NL}
to the corresponding local one,
as
the data $(g_\ep)_\ep$ approximate a new source $g$
and the coefficients $\tau_\ep$
converge to a certain new viscosity parameter $\tau$.
The local form of the limiting
Cahn-Hilliard equation
reads
\begin{align}
    \label{eq1:L}
    \partial_t u - \Delta \mu =0
    \qquad&\text{in } (0,T)\times\Omega\,,\\
    \label{eq2:L}
    \mu = \tau \partial_t u -\Delta u +\Psi'(u)-g
    \qquad&\text{in } (0,T)\times\Omega\,,\\
    \label{eq:2bis-Neumann-L}
    \partial_{\bf n} u=0 \quad\text{and}\quad\partial_{\bf n} \mu=0\qquad&\text{on } (0,T)\times\partial\Omega\,,\\
    \label{eq3:L}
    u(0)=u_{0} \qquad&\text{in } \Omega\,,
\end{align}
where
$\tau\geq0$ is the limiting
viscosity parameter,
which is allowed to vanish.
The choices $\tau>0$ and $\tau=0$ correspond
to the viscous case and pure case, respectively.

As its nonlocal counterpart, the 
local Cahn-Hilliard equation
is related to the gradient flow in the 
$H^{-1}$ metric of the Ginzburg-Landau
free energy functional
\begin{equation}
\label{eq:enfunCH}
\mathcal E(\varphi)=
\frac12\int_{\Omega}|\nabla \varphi(x)|^2\,\xd x
+\int_\Omega \Psi(\varphi(x)) \,\xd x\,,
\end{equation}
in the sense that the contribution
$-\Delta u + \Psi'(u)$ results from the 
subdifferentiation of $\mathcal E$.
Again, the viscosity term
$\tau\partial_t u$ acts on the dissipation
of the system: if $\tau=0$, 
one recovers the so-called {\em pure}
Cahn-Hilliard equation, while if $\tau>0$
one obtains the {\em viscous} 
Cahn-Hilliard equation.
In our analysis, the nonlocal viscosity 
coefficients $(\tau_\ep)_\ep$ are assumed to 
be strictly positive, while the local 
coefficient $\tau$ is allowed to vanish.

The local Cahn-Hilliard equation
was first proposed in \cite{CH}
in relation to phase-change in 
metallic alloys and 
to spinodal decomposition (see \cite{maier-stan1}).
Nowadays, the model
is a widely used
in various contexts such as 
diffuse interface
modelling in physics and biology, 
with several applications to
tumor growth dynamics, 
image processing, and population dynamics.
From the mathematical point of view,
the local Cahn-Hilliard equation
has been studied thoroughly 
in the last decades, also 
in much more complex settings.
We mention, among many others,
the works
\cite{cher-gat-mir, cher-mir-zel, cher-pet, colli-fuk-CHmass,col-fuk-eqCH, col-gil-spr, 
gil-mir-sch, liu-wu-ARMA} on well-posedness
also under more general dynamic boundary conditions.
Some studies on nonlinear viscosity 
contributions have been proposed in 
\cite{bcst1, mir-sch, scar-VCHDBC}.
We also recall the contributions
\cite{col-far-hass-gil-spr, col-gil-spr-contr, col-gil-spr-contr2, CS17, hinter-weg}
dealing with optimal control problems,
as well as
\cite{col-fuk-diffusion, col-scar, gil-mir-sch-longtime} on asymptotics.
The local Cahn-Hilliard equation
has also been widely studied recently
in connection to 
diffuse-interface models for
fluid-dynamics: we refer to 
\cite{abels-ARMA, abels-gar-grun,
can-matt-nab-ARMA,
gal-grass-mir,
gal-grass-wu-ARMA} and the references 
therein.
\\

As already mentioned, 
the behaviour of the nonlocal Cahn-Hilliard 
equation ``approaches'' the one of the local 
equation when the family of convolution
kernels is sufficiently peaked around $0$.
The study of nonlocal-to-local convergence
of energy functionals in relation
to Sobolev spaces theory had been
carried out originally by
by J.~Bourgain, H.~Brezis, 
P.~Mironescu in \cite{BBM, BBM2}, 
and by V.~Mazy'a and T.~Shaposhnikova 
in \cite{MS, MS2}. 
This asymptotic analysis was also extended 
by A.~C.~Ponce in \cite{ponce04, ponce}, 
with studies on Gamma convergence and
nonlocal Poincar\'e-type inequalities.
A first criterion for the convergence
of gradient flows from the Gamma-convergence
of the respective energies was 
given by E.~Sandier and 
S.~Serfaty in \cite{sand-serf}
in a abstract setting and for smooth energies,
with applications to Ginzburg-Landau functionals
(see also 
\cite{lin, sand-serf-2, serf} for further details
in this direction).\\

In particular, the above-mentioned 
results \cite{ponce04, ponce} 
provide the pointwise
convergence
\[
  \lim_{\ep\searrow0}
  \mathcal E_\ep(\varphi)= \mathcal E(\varphi)
  \qquad\forall\,\varphi\in H^1(\Omega)
\]
as soon as the convolution kernels $(K_\ep)_\ep$
are chosen as
\begin{equation}\label{kernel}
K_\ep:\Omega\times\Omega\to[0,+\infty)\,,
\qquad
K_\ep(x,y):=\frac{\rho_\ep(|x-y|)}{|x-y|^2}\,,
\quad x,y\in\Omega\,,
\end{equation}
where $(\rho_\ep)_\ep$ is 
a suitable family of mollifiers converging to
a Dirac delta.

Building upon these variational convergences, 
in a previous contribution of ours
\cite{DRST} we rigorously derived
some nonlocal-to-local asymptotics 
of solutions to Cahn-Hilliard equations
in the setting of periodic boundary conditions
and with no viscosity effects.
The 
periodic setting adopted in \cite{DRST}
was fundamental to overcome the 
singular behaviour of the convolution kernel
\eqref{kernel}. Indeed, 
kernels in the form \eqref{kernel}
do not possess any $W^{1,1}$ regularity
(see for
example \cite[Remark 1]{col-gil-spr-OCNLphase}),
which is the usual minimum
requirement in the whole literature
on nonlocal Cahn-Hilliard systems. This 
resulted in the impossibility of framing 
the nonlocal problem in any available 
existence theory, and required an ad-hoc analysis.
In this direction, the 
arguments strongly relied on the
assumption of periodic boundary conditions.

The results in \cite{DRST} 
(see also \cite{MRT18} for a simpler case)
are very satisfactory since they
provide a novel contribution in 
the direction of local asymptotics
of Cahn-Hilliard equations.
Nevertheless,
the most natural choice of 
boundary conditions
in phase-field modelling if of 
no-flux type.
Consequently, it is 
crucial in this direction
to generalize 
the periodic framework to
other settings more
suited for applications.
The nonlocal-to-local convergence
of pure Cahn-Hilliard equations
with Neumann boundary conditions
was, to the authors' knowledge, 
still an open problem.
The main novelty
of the present paper is to finally extend
some rigorous 
nonlocal-to-local convergence
results for Cahn-Hilliard equations
to the case of
homogeneous Neumann boundary conditions.\\

Let us briefly describe now the main
difficulties arising in the 
case of Neumann boundary conditions.

The first hurdle has been already 
anticipated and concerns the 
regularity of the convolution kernel.
Indeed, in the form \eqref{kernel}
the kernel $K_\ep$ is not $W^{1,1}$,
and not even $L^1$ in dimension $d=2$.
This results in the necessity 
of rigorously formulate the nonlocal 
problem without relying on 
any available existence theory.
The main idea here is that 
even if the convolution operator 
$\varphi\mapsto K_\ep*\varphi$ may be
ill-defined under \eqref{kernel},
the nonlocal operator 
$B_\ep:\varphi\mapsto (K_\ep*1)\varphi
-(K_\ep*\varphi)$ appearing in
the equation \eqref{eq2:NL} can be 
rigorously defined instead.

The second main problem consists in 
the (im)possibility of
proving space regularity for
the solutions to the nonlocal equation
(i.e.~when $\ep>0$ is fixed).
If the convolution kernel is
$W^{1,1}$ this follows directly 
from the properties of the
convolution, i.e.~formally 
shifting the gradient operator
on the kernel as $\nabla(K_\ep*u_\ep)
=(\nabla K_\ep) * u_\ep$.
However,
for singular kernels as in \eqref{kernel}
this procedure fails.
Under periodic boundary conditions
(i.e.~working on the $d$-dimensional flat torus) 
the main idea to overcome this problem
was to use a certain integration-by-parts
formula, which hinges in turn on 
some compatibility conditions between
the convolution operator and the Laplace 
operator. More specifically, 
in \cite{DRST} the periodic setting 
allowed to prove a (formal) relation
in the form $\nabla (K_\ep*u_\ep)=
K_\ep*\nabla u_\ep$, from which 
one could deduce
$H^1$-regularity of the nonlocal solutions.
Nevertheless, under
Neumann boundary conditions
(i.e.~working on a bounded domain $\Omega\subset
\mathbb{R}^d$), in order to prove 
an analogous compatibility relation 
one is forced to extend the nonlocal solution
$u_\ep$ to $0$ outside $\Omega$. Clearly,
$H^1$-regularity in $\Omega$ does not 
imply $H^1$-regularity on the whole $\mathbb{R}^d$
for such extension.
This gives rise
to several extra boundary contribution terms
which blow up as the approximating parameter
vanishes.

The main consequence is that in the 
case of Neumann boundary conditions
one loses any $H^1$-estimate on the nonlocal
solutions.
It follows that the natural variational 
setting to frame the nonlocal problem
\eqref{eq1:NL}--\eqref{eq3:NL}
is not the usual one given by 
the triple $(H^1(\Omega), L^2(\Omega), 
H^1(\Omega)^*)$,
but instead 
an abstract one $(V_\ep, L^2(\Omega), V_\ep^*)$,
depending on $\ep$, where
$V_\ep$ represents, roughly speaking,
the domain of the nonlocal energy contribution
in \eqref{eq:enfunGL}.
As the inclusion $V_\ep\hookrightarrow L^2(\Omega)$ is not compact, 
one loses any reasonable 
compactness property on the approximated solutions
in order to pass to the limit in the 
nonlinearity.
This issue is overcome by the introduction
of the viscosity term
$\tau_\ep\partial_t
u_\ep$. Indeed, if $\tau_\ep$ is strictly 
positive one can 
show ``by hand'' a strong convergence
in $L^2(\Omega)$ for some regularized solutions,
even without relying on any $H^1$ estimates.

The third main problem concerns the boundary 
conditions of Neumann type for $u$ in the limiting
local problem.
Indeed, while the nonlocal system
is of order $2$ in space, hence it only 
needs one boundary condition (for the chemical 
potential), the limiting local equation 
is of order $4$ in space and requires
two boundary conditions instead: 
one for $\mu$ and one for $u$.
One of the major point is to understand
which is the natural extra boundary condition
for $u$, and how this one emerges when 
$\ep\searrow0$.
It is clear that the Neumann boundary condition
for the chemical potential is preserved 
by the local asymptotics. On the other hand,
the scenario for $u$ is more subtle:
the answer is implicitly given by 
studying the Gamma convergence of the nonlocal energies.
Indeed, in \cite{ponce}
Ponce proved a Gamma convergence result in
the form
\[
  \lim_{\ep\searrow0}\frac{1}{4}
  \int_{\Omega}\int_{\Omega}K_\ep(x,y)
  |\varphi_\ep(x)-\varphi_\ep(y)|^{2}\,\xd x\,\xd y
  =
  \begin{cases}
  \frac12\int_\Omega|\nabla\varphi(x)|^2\,\xd x
  \quad&\text{if } \nabla\varphi\in L^2(\Omega)\,,\\
  +\infty \quad&\text{otherwise}\,,
  \end{cases}
\]
whenever $\varphi_\ep \to \varphi$ in $L^2(\Omega)$.
Note that the limiting energy contribution
on the right-hand side is 
the potential associated to
the negative Laplacian with 
homogeneous Neumann boundary conditions.
Hence, this implicitly reveals that the ``correct''
choice of
boundary condition arising for $u$ in the local
limit is of Neumann type.
Such idea is indeed proved rigorously
performing the local asymptotics on the 
variational formulation for the 
nonlocal problem \eqref{eq1:NL}--\eqref{eq3:NL}.
The advantage of working using a 
variational approach is that the boundary 
conditions are implicitly contained in
the variational formulation itself,
and they have not to be tracked 
explicitly performing a pointwise analysis on 
the boundary.
\\

We are now in a position to
present the two main theorems 
that we prove in this paper.

The first main result is the well-posedness for the 
nonlocal system \eqref{eq1:NL}--\eqref{eq3:NL}
with Neumann boundary conditions 
when $\ep>0$ is fixed.
Here, the viscosity coefficient
$\tau_\ep$ is assumed to be strictly positive,
the convolution kernel is of the form \eqref{kernel},
and the double-well potential may be 
singular. In particular, we include in our
analysis all the typical examples of 
polynomial, logarithmic, and double-obstacle
potentials:
\begin{align*}
    &\Psi_{pol}(r):=\frac14(r^2-1)^2\,, \qquad
    r\in\mathbb{R}\,,\\
    &\Psi_{log}(r):=
    \frac{\vartheta}{2}\left[
    (1+r)\ln(1+r) + (1-r)\ln(1-r)
    \right] - \frac{\vartheta_0}{2}\,,
    \qquad r\in(-1,1)\,, \quad
    0<\vartheta<\vartheta_0\,,\\
    &\Psi_{doub}(r):=
    \begin{cases}
    c(1-r^2) \quad&\text{if } r\in[-1,1]\,,\\
    +\infty \quad&\text{otherwise}\,,
    \end{cases}
    \qquad c>0\,.
\end{align*}
In view of this, 
the derivative of $\Psi$ is interpreted
as a subdifferential in the sense of convex analysis,
and equation \eqref{eq2:NL} 
becomes a differential inclusion.
The proof of well-posedness is based
on a suitable approximation of the problem, 
given by a Yosida-type regularization on the 
nonlinearity and an additional elliptic 
local regularization in the chemical potential.
A novel abstract variational setting
$(V_\ep, L^2(\Omega), V_\ep^*)$ is introduced 
and uniform estimates on the 
approximated solutions are obtained.
Using the viscous contribution in the chemical 
potential, strong compactness in $L^2$
is recovered even with no
$H^1$-estimates on the solutions.
Strong convergences are then
proved and a passage to the limit 
provides solutions to the original 
nonlocal problem.

The second main result of this paper 
is the asymptotic analysis of the 
nonlocal system as $\ep\searrow0$.
Here, we assume that the forcing terms
$(g_\ep)_\ep$ converge to a certain 
source $g$,
and that the viscosity coefficients satisfy
\[
  \lim_{\ep\searrow0}\tau_\ep=\tau\,.
\]
Here, the coefficient $\tau$ 
is allowed to be nonnegative:
when $\tau>0$ we obtain then nonlocal-to-local
convergence of viscous Cahn-Hilliard equations, 
while if $\tau=0$ we obtain 
the local asymptotics of nonlocal viscous 
Cahn-Hilliard equations with vanishing 
viscosities.
The proof is based on uniform estimates 
in $\ep$ on the nonlocal solutions.
Here, the strong compactness in $L^2$ is obtained
by proving an ad-hoc compactness inequality
involving the family on functional spaces
$(V_\ep)_{\ep>0}$.
The identification of the local limit 
$-\Delta u$ is obtained through 
the combination of monotone analysis techniques
and Gamma-convergence results for the 
nonlocal energy functional \eqref{eq:enfunGL}.\\

We conclude by highlighting some possible
applications of our results
to phase-field modelling.

The relevance of nonlocal-to-local
convergence of Cahn-Hilliard equations
with Neumann boundary conditions
is significant: among many others, 
we can mention here 
possible connections with
optimal control of 
tumor growth models.
In the recent years, 
phase-field models
have been widely used in tumor
growth dynamics, both in the 
local case 
(see \cite{eb-knopf, garcke-lam2, 
garcke-lam, garcke-et-al,
garcke-lam-roc, garcke}
and the references therein)
and in the nonlocal case 
(see \cite{frig-lam-roc}
and \cite{porta-grass, mel-roc}
for nonlocal Cahn-Hilliard equations
with reaction terms).
One of the main advantages of the nonlocal
setting is that regularity results
on the solutions are usually easier 
to obtain, not needing to 
rely on elliptic regularity properties.
As a consequence, 
the availability of rigorous 
nonlocal-to-local convergence results
would give the opportunity to
approximate solutions
to local phase-field systems with 
the solutions to the corresponding nonlocal ones,
which are indeed simpler to handle
on the mathematical side.
For example,
refined regularity on the solutions
are fundamental when dealing with 
optimal control problems, 
in order to write first-order conditions
for optimality. Hence, possible 
outcomes of nonlocal-to-local 
asymptotics concern 
refined analysis of optimal control 
of phase-field systems, in terms
of passing to the (local) limit
within first-order conditions for optimality
for the nonlocal system.\\

The paper is structured in the following way.
In Section~\ref{s:main} we state the assumptions, and
we introduce the abstract variational settings. Section~\ref{s:results}
is devoted to 
present the two main results.
Section~\ref{proof1} contains the proof
of well-posedness of the nonlocal 
system \eqref{eq1:NL}--\eqref{eq3:NL},
while Section~\ref{proof2}
focuses on the proof of nonlocal-to-local
asymptotics.

\section{Mathematical setting}
\label{s:main}

\subsection{Assumptions}
Throughout the paper, $\Omega$ is a smooth bounded
domain in $\mathbb{R}^d$, with $d=2,3$, and $T>0$ is 
a fixed final time. 
We will use the notation
$Q_t:=(0,t)\times\Omega$ for every $t\in(0,T]$,
and set $Q:=Q_T$, and $\Sigma:=(0,T)\times \partial \Omega$.
Moreover, $(\rho_\ep)_{\ep>0}$
is a family of mollifiers
with the following properties
(see \cite{ponce04, ponce}):
\begin{align*}
    &\rho_\ep:\mathbb{R}\to[0,+\infty)\,,
    \qquad\rho_\ep\in L^1_{loc}(\mathbb{R})\,, \qquad
    \rho_\ep(r)=\rho_\ep(-r) \quad\forall\,r\in\mathbb{R}\,,
    \qquad\forall\,\ep>0\,;\\
    &\int_0^{+\infty}
    \rho_\ep(r)r^{d-1}\,\xd r =\frac2{C_d} \quad\forall\,\ep>0\,;\\
    &\lim_{\ep\searrow0}\int_\delta^{+\infty}
    \rho_\ep(r)r^{d-1}\,\xd r=0 \quad\forall\,\delta>0\,,
\end{align*}
where $C_d:=\int_{S^{d-1}}
|e_1\cdot\sigma|^2\,\xd\mathcal H^{d-1}(\sigma)$.
We define the family of 
convolution kernels as
\begin{equation}
    K_\ep:\Omega\times\Omega\to [0,+\infty)\,, \qquad
    K_\ep(x,y):=\frac{\rho_\ep(|x-y|)}{|x-y|^2}\,,
    \quad\text{for a.e.~}x,y\in\Omega\,, \qquad \ep>0\,.
\end{equation}

Throughout the paper, $\gamma:\mathbb{R}\to2^\mathbb{R}$ is a maximal
monotone graph with $0\in\gamma(0)$ and 
$\Pi:\mathbb{R}\to\mathbb{R}$ is 
$C_\Pi$-Lipschitz-continuous
with $\Pi(0)=0$.
It follows in particular that there exists
a proper, convex, lower semicontinuous function
$\hat\gamma:\mathbb{R}\to[0,+\infty]$ with $\hat\gamma(0)=0$
and $\partial\hat\gamma=\gamma$ in the sense of convex analysis. Similarly, we set
$\hat\Pi(s):=\int_0^s\Pi(r)\,\xd r$ for every $s\in \mathbb{R}$.
With these notations, the double-well potential 
$\Psi$ entering the system is represented by the sum 
$\hat\gamma + \hat\Pi$.

\subsection{Variational setting and preliminaries}
We introduce the functional spaces
\begin{equation*}
    H:=L^2(\Omega)\,, \qquad
    V:=H^1(\Omega)\,, \qquad
    W:=\left\{\varphi\in H^2(\Omega): 
    \partial_{\bf n}\varphi = 0 
    \text{ a.e.~on } \partial \Omega\right\}\,,
\end{equation*}
endowed with their natural norms, and 
we identify $H$ with its dual space in the usual way, 
so that
\[
  W \hookrightarrow V \hookrightarrow H
  \hookrightarrow V^* \hookrightarrow W^*
\]
where all the inclusions are continuous, dense, and compact. The Laplace operator with homogeneous 
Neumann conditions will be intended
both as a bounded linear operator
\[
  -\Delta:V\to V^*\,, \qquad
  \langle-\Delta\varphi,\zeta\rangle_V:=
  \int_\Omega\nabla\varphi(x)
  \cdot\nabla\zeta(x)\,\xd x\,, \quad\varphi,\zeta\in V\,,
\]
and as unbounded linear operator on $H$
with domain $W$.
For every $\varphi\in V^*$,
we use the notation 
$\varphi_\Omega:=\frac1{|\Omega|}
\langle \varphi,1\rangle_{V}$ for the mean
value on $\Omega$.
As a direct consequence
of the Poincar\'e-Wirtinger inequality it holds 
that
\[
  -\Delta:\{\varphi\in V:\varphi_\Omega=0\}\to
  \{\varphi\in V^*:\varphi_\Omega=0\}
\]
is a linear isomorphism. 
We will denote its inverse by
\[
\mathcal N:\{\varphi\in V^*:\varphi_\Omega=0\}\to
  \{\varphi\in V:\varphi_\Omega=0\}\,.
\]

For every $\ep>0$, we set
\begin{align*}
    V_\ep&:=
    \left\{\varphi\in L^2(\Omega):
    \int_{\Omega}\int_\Omega
    K_\ep(x,y)|\varphi(x)-\varphi(y)|^2\,\xd x\,\xd y
    <+\infty\right\}\,,\\
    E_\ep(\varphi)&:=
    \frac14\int_{\Omega}\int_\Omega
    K_\ep(x,y)|\varphi(x)-\varphi(y)|^2\,\xd x\,\xd y\,,
    \quad\varphi\in V_\ep\,,
\end{align*}
and 
\begin{align*}
    W_\ep&:=
    \left\{\varphi\in L^2(\Omega):
    x\mapsto\int_{\Omega}
    K_\ep(x,y)(\varphi(x)-\varphi(y))\,\xd y
    \in L^2(\Omega)\right\}\,,\\
    B_\ep(\varphi)(x)&:=
    \int_{\Omega}
    K_\ep(x,y)(\varphi(x)-\varphi(y))\,\xd y\,,
    \quad\text{for a.e.~}x\in\Omega\,,\quad
    \varphi\in W_\ep\,.
\end{align*}
We point out that 
$E_\ep:V_\ep\to[0,+\infty)$ is convex 
and $B_\ep: H\to H$ is a linear 
unbounded operator with domain $W_\ep$.
Additionally, we define the maps
\[
  \|\cdot\|_{V_\ep}:V_\ep\to[0,+\infty)\,,
  \qquad
  \|\cdot\|_{W_\ep}:W_\ep\to[0,+\infty)
\]
as
\[
  \|\varphi\|_{V_\ep}:=\sqrt{\|\varphi\|_H^2+
  2E_\ep(\varphi)}\,, \qquad
  \|\varphi\|_{W_\ep}:=\sqrt{\|\varphi\|_H^2
  +\|B_\ep(\varphi)\|_H^2}\,,
\]
and the bilinear forms
\[
  (\cdot,\cdot)_{V_\ep}:V_\ep\times V_\ep\to[0,+\infty)\,,
  \qquad
  (\cdot,\cdot)_{W_\ep}:W_\ep\times W_\ep\to[0,+\infty)
\]
as
\begin{align*}
  (\varphi_1,\varphi_2)_{V_\ep}&:=(\varphi_1,\varphi_2)_H+
  \frac12\int_\Omega\int_\Omega K_\ep(x,y)
  (\varphi_1(x)-\varphi_1(y))(\varphi_2(x)-\varphi_2(y))
  \,\xd x\,\xd y\,,\\
  (\varphi_1,\varphi_2)_{W_\ep}&:=(\varphi_1,\varphi_2)_H+
  (B_\ep(\varphi_1), B_\ep(\varphi_2))_H\,.
\end{align*}

We collect some properties in the next lemma.
\begin{lemma}
\label{lemma:properties}
  The following properties hold for every $\ep>0$.
  \begin{enumerate}
  \item The maps $\|\cdot\|_{V_\ep}$ and $\|\cdot\|_{W_\ep}$
  are complete norms on $V_\ep$ and $W_\ep$, respectively.
  \item The bilinear forms
  $(\cdot,\cdot)_{V_\ep}$ and $(\cdot,\cdot)_{W_\ep}$
  are scalar products on $V_\ep$ and $W_\ep$
  inducing the norms
  $\|\cdot\|_{V_\ep}$ and $\|\cdot\|_{W_\ep}$, respectively.
  In particular, $V_\ep$ and $W_\ep$ are Hilbert spaces.
  \item For every $\sigma\in(0,1]$ we have 
  $C^{0,\sigma}(\overline\Omega)\embed W_\ep$
  continuously, and there exists $C_{\ep,\sigma}>0$ 
  such that 
  \[
  B_\ep(\varphi)\in L^\infty(\Omega)\,, \quad
  \|B_\ep(\varphi)\|_{L^\infty(\Omega)}\leq
  C_{\ep,\sigma}\|\varphi\|_{C^{0,\sigma}(\overline\Omega)}
  \qquad\forall\,\varphi\in C^{0,\sigma}(\overline\Omega)\,.
  \]
  \item The following inclusions are continuous and dense:
  \[
  W_\ep \embed V_\ep
  \embed H\,.
  \]
  Moreover, $(B_\ep, W_\ep)$ is maximal monotone on $H$.  \item The unbounded linear operator $B_\ep:H\to H$
  extends to a bounded linear operator 
  $B_\ep:V_\ep\to V_\ep^*$, and it holds that
  \[
  \|B_\ep(\varphi)\|_{V_\ep^*}\leq\|\varphi\|_{V_\ep}
  \quad\forall\,\varphi\in V_\ep\,.
  \]
  \item The map $E_\ep:V_\ep\to[0,+\infty)$
  is of class $C^1$ and $DE_\ep=B_\ep:V_\ep\to V_\ep^*$.
  \end{enumerate}
\end{lemma}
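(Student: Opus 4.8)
The plan is to deduce all six properties from a single structural observation: the space $V_\ep$ together with $E_\ep$ is governed by a weighted $L^2$-space. I would introduce the Hilbert space $L^2_{K_\ep}(\Omega\times\Omega)$ of measurable $\Phi$ on $\Omega\times\Omega$ with $\int_\Omega\int_\Omega K_\ep(x,y)|\Phi(x,y)|^2\,\xd x\,\xd y<+\infty$, and the linear difference map $(D\varphi)(x,y):=\varphi(x)-\varphi(y)$. Then $V_\ep=\{\varphi\in H:D\varphi\in L^2_{K_\ep}\}$, one has $2E_\ep(\varphi)=\tfrac12\|D\varphi\|_{L^2_{K_\ep}}^2$, and the bilinear form $(\cdot,\cdot)_{V_\ep}$ is precisely the pullback $(\varphi_1,\varphi_2)_H+\tfrac12(D\varphi_1,D\varphi_2)_{L^2_{K_\ep}}$. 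Bilinearity and symmetry are immediate, and positive-definiteness follows from $(\varphi,\varphi)_{V_\ep}\ge\|\varphi\|_H^2$; this settles the scalar-product claims in (1)--(2) for $V_\ep$, and the same works for $W_\ep$ through $\varphi\mapsto(\varphi,B_\ep\varphi)\in H\times H$.

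For completeness I would show that these maps have closed image. For $V_\ep$, a $\|\cdot\|_{V_\ep}$-Cauchy sequence is both $H$-Cauchy and $D$-Cauchy, so $\varphi_n\to\varphi$ in $H$ and $D\varphi_n\to\Phi$ in $L^2_{K_\ep}$; passing to an a.e.-convergent subsequence forces $\Phi=D\varphi$, whence $\varphi\in V_\ep$ and $\varphi_n\to\varphi$ in $V_\ep$. Completeness of $W_\ep$ is the closedness of the linear operator $B_\ep$ on $H$, obtained analogously once the symmetric identity below is in hand. Properties (5)--(6) are then essentially formal: the continuous symmetric form $Q(\varphi,\zeta):=(D\varphi,D\zeta)_{L^2_{K_\ep}}$ satisfies $|Q(\varphi,\zeta)|\le\|D\varphi\|_{L^2_{K_\ep}}\|D\zeta\|_{L^2_{K_\ep}}$ by Cauchy--Schwarz, so $\varphi\mapsto\tfrac12 Q(\varphi,\cdot)$ is bounded $V_\ep\to V_\ep^*$ with exactly the bound $\|B_\ep\varphi\|_{V_\ep^*}\le\|\varphi\|_{V_\ep}$; and since $E_\ep=\tfrac14 Q(\cdot,\cdot)$ is a continuous quadratic form it is of class $C^1$ with $DE_\ep(\varphi)=\tfrac12 Q(\varphi,\cdot)$, which is the $V_\ep^*$-valued extension of $B_\ep$.

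For (3) I would use $|\varphi(x)-\varphi(y)|\le[\varphi]_{C^{0,\sigma}}|x-y|^\sigma$ and pass to polar coordinates, bounding $|B_\ep\varphi(x)|\le[\varphi]_{C^{0,\sigma}}\int_\Omega\rho_\ep(|x-y|)|x-y|^{\sigma-2}\,\xd y\le C\,[\varphi]_{C^{0,\sigma}}\int_0^{\mathrm{diam}\,\Omega}\rho_\ep(r)r^{d-3+\sigma}\,\xd r$, the radial integral being finite for each fixed $\ep>0$; this yields the $L^\infty$-estimate and hence $C^{0,\sigma}(\overline\Omega)\embed W_\ep$ continuously, and likewise $C^{0,\sigma}(\overline\Omega)\subset V_\ep$. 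The latter gives density in (4) (as $C^{0,\sigma}$ is dense in $H$, and, by a mollification argument at fixed $\ep$, in $V_\ep$). The continuous inclusions in (4) come from $\|\cdot\|_H\le\|\cdot\|_{V_\ep}$ and from $2E_\ep(\varphi)=(B_\ep\varphi,\varphi)_H\le\tfrac12(\|\varphi\|_H^2+\|B_\ep\varphi\|_H^2)$, i.e. $\|\varphi\|_{V_\ep}^2\le\tfrac32\|\varphi\|_{W_\ep}^2$. Monotonicity of $B_\ep$ is $(B_\ep\varphi,\varphi)_H=2E_\ep(\varphi)\ge0$ together with linearity, and for maximality I would prove surjectivity of $I+B_\ep$: given $f\in H$, minimise the strictly convex, coercive, lower semicontinuous functional $\tfrac12\|\varphi\|_{V_\ep}^2-(f,\varphi)_H$ over the Hilbert space $V_\ep$; the unique minimiser $\varphi$ satisfies $(\varphi,\zeta)_{V_\ep}=(f,\zeta)_H$ for all $\zeta\in V_\ep$, which forces $B_\ep\varphi=f-\varphi\in H$, so $\varphi\in W_\ep$ and $\varphi+B_\ep\varphi=f$.

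The \emph{main obstacle} is the bridge between the two descriptions of $B_\ep$ — the pointwise one on $W_\ep$ and the bilinear one through $Q$ — namely the symmetric identity $(B_\ep\varphi,\zeta)_H=\tfrac12 Q(\varphi,\zeta)$ and the inclusion $W_\ep\subset V_\ep$, on which several steps above tacitly rely. The difficulty is genuine because $K_\ep$ fails to be integrable already in $d=2$, so $K_\ep(x,y)(\varphi(x)-\varphi(y))\zeta(y)$ need not be absolutely integrable and Fubini cannot be applied naively. I would resolve this by truncation: set $K_\ep^n:=\min\{K_\ep,n\}$, for which the identity and monotonicity hold trivially, obtain $(B_\ep^n\varphi,\varphi)_H=\tfrac12\int_\Omega\int_\Omega K_\ep^n(x,y)|\varphi(x)-\varphi(y)|^2\,\xd x\,\xd y$, and let $n\to\infty$ using monotone convergence on the right-hand side (identifying the limit with $2E_\ep(\varphi)$, hence $W_\ep\subset V_\ep$) and dominated convergence for the signed integrals defining $B_\ep^n\varphi$; once $\varphi\in V_\ep$, the product $K_\ep\,D\varphi\,D\zeta$ lies in $L^1(\Omega\times\Omega)$ by Cauchy--Schwarz and the symmetric identity follows by a clean application of Fubini. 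This truncation-and-limit step, rather than any single computation, is the technical heart of the lemma.
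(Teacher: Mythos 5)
Your reformulation via the weighted space $L^2_{K_\ep}$ and the difference map $D$ is sound, and for items (1)--(3), (5), (6) it runs parallel to the paper's proof; indeed, your truncation-and-Fubini justification of the symmetric identity and of $W_\ep\subset V_\ep$ makes explicit a step the paper dismisses as ``a direct computation'', and your definition of the extension in (5) directly through the form $Q$ even avoids the density argument the paper invokes there. The genuine problems both sit in item (4), exactly where you depart from the paper. First, the density of $W_\ep$ in $V_\ep$: you propose to obtain it from density of $C^{0,\sigma}(\overline\Omega)$ in $V_\ep$ ``by a mollification argument at fixed $\ep$''. This is not a routine step, and I do not see how to carry it out: $\Omega$ is bounded, so mollifying requires first extending $\varphi\in V_\ep$ across $\partial\Omega$, and no extension operator for $V_\ep$ is available (extension by zero does not control the energy near the boundary); the usual substitute for fractional-type spaces --- inward dilation followed by mollification --- fails here because the kernel is $\rho_\ep(|x-y|)/|x-y|^2$ with $\rho_\ep$ merely locally integrable and with no monotonicity or scaling hypothesis, so the energy of a dilated function cannot be compared with $E_\ep$ of the original. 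The paper avoids this entirely: it proves density of $W_\ep$ in $V_\ep$ by showing that the resolvents $\varphi_\delta=(I+\delta B_\ep)^{-1}\varphi$ converge to $\varphi$ in $H$ while satisfying $\|\varphi_\delta\|_{V_\ep}\le\|\varphi\|_{V_\ep}$, and then upgrades to strong $V_\ep$-convergence by uniform convexity.

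Second, maximal monotonicity. Minimizing $\tfrac12\|\cdot\|_{V_\ep}^2-(f,\cdot)_H$ over $V_\ep$ yields $\varphi\in V_\ep$ with $\tfrac12 Q(\varphi,\zeta)=(f-\varphi,\zeta)_H$ for all $\zeta\in V_\ep$, but the assertion that this ``forces $B_\ep\varphi=f-\varphi\in H$, so $\varphi\in W_\ep$'' is precisely the unproven weak-to-strong implication: membership in $W_\ep$ requires the pointwise integral $\int_\Omega K_\ep(x,y)(\varphi(x)-\varphi(y))\,\xd y$ to converge absolutely for a.e.~$x$, and since for $d=2$ one has in general $K_\ep(x,\cdot)\notin L^1(\Omega)$, this does not follow from the variational identity via Cauchy--Schwarz, nor from your truncation lemma, which goes in the opposite direction (from pointwise membership to the variational identity). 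As it stands, your argument proves maximality of the form extension of $B_\ep$ (a Friedrichs-type operator); since a monotone operator admitting a proper monotone extension is never maximal, the claim for $(B_\ep,W_\ep)$ itself stands or falls with exactly this identification. The paper's construction is designed to sidestep this: it regularizes by a bi-Laplacian, $\varphi_{\delta\lambda}+\lambda\Delta^2\varphi_{\delta\lambda}+\delta B_\ep(\varphi_{\delta\lambda})=\varphi$, whose solutions lie in $W\embed C^{0,1/4}(\overline\Omega)\embed W_\ep$, i.e.~genuinely in the domain of the pointwise operator, and then identifies the limit through the symmetry of $B_\ep$ tested against $\zeta\in W$. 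You would need either to adopt that scheme or to supply an independent proof of the weak-to-strong step; note that the same issue undercuts your one-line claim that completeness of $W_\ep$ follows ``analogously once the symmetric identity is in hand'', since closedness of the pointwise operator is again a weak-to-strong statement.
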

\begin{proof}
  {\sc Step 1}:
  properties {\em (1)--(2)}.
  It is clear that $\|\cdot\|_{V_\ep}$ and $\|\cdot\|_{W_\ep}$
  are norms on $V_\ep$ and $W_\ep$, respectively.
  Let now $(y_n)_n$ be a Cauchy sequence in $V_\ep$: then in 
  particular it is a Cauchy sequence in $H$, so there exists $y\in H$ such that 
  $y_n\to y$ in $H$. By lower semicontinuity it follows that 
  $y\in V_\ep$ as well, and that $y_n\to y$ in $V_\ep$.
  A similar argument shows that $W_\ep$ is complete as well.
  A direct computation shows that
  $(\cdot,\cdot)_{V_\ep}$ and $(\cdot,\cdot)_{W_\ep}$
  are scalar products inducing the norms above.\\
  
  {\sc Step 2}:
  property {\em (3)}.
  For every $\varphi\in 
  C^{0,\sigma}(\overline\Omega)$, we have
  \[
  |B_\ep(\varphi(x))|\leq\int_\Omega
  \rho_\ep(|x-y|)
  \frac{|\varphi(x)-\varphi(y)|}{|x-y|^2}\,\xd y\leq
  \|\varphi\|_{C^{0,\sigma}(\overline\Omega)}
  \int_\Omega\frac{\rho_\ep(|x-y|)}{|x-y|^{2-\sigma}}\xd y,
  \]
  where
  \begin{align*}
  \int_\Omega\frac{\rho_\ep(|x-y|)}{|x-y|^{2-\sigma}}\xd y&=
  \int_{\Omega-x}\frac{\rho_\ep(|z|)}{|z|^{2-\sigma}}\xd z
  \leq\int_{\mathbb{R}^d}
  \frac{\rho_\ep(|z|)}{|z|^{2-\sigma}}\xd z=
  \int_{\{|z|\leq1\}}
  \frac{\rho_\ep(|z|)}{|z|^{2-\sigma}}\xd z
  +\int_{\{|z|>1\}}
  \frac{\rho_\ep(|z|)}{|z|^{2-\sigma}}\xd z\\
  &\leq\max_{|r|\leq1}\rho_\ep(r)\int_{\{|z|\leq1\}}
  \frac1{|z|^{2-\sigma}}\xd z +
  \int_{\{|z|>1\}}\rho_\ep(|z|)\,\xd z\,.
  \end{align*}
  The first term on the right-hand side is finite 
  since $2-\sigma<d$, while the second term can
  be written as
  \[
  |S^{d-1}|\int_1^{+\infty}\rho_\ep(r)r^{d-1}\,\xd r<+\infty
  \]
  by the assumptions on $(\rho_\ep)_\ep$. 
  The thesis follows by the arbitrariness of $x\in\Omega$.\\
  
  {\sc Step 3}:
  property {\em (4)}. First of all
  the fact that the inclusion
  $V_\ep\embed H$ is continuous is trivial
  by the definition of $\|\cdot\|_{V_\ep}$. Second, 
  for $\varphi\in W_\ep$, a direct computation shows that
  \[
  E_\ep(\varphi)=\frac14\int_\Omega\int_\Omega K_\ep(x,y)
  |\varphi(x)-\varphi(y)|^2\,\xd x\,\xd y=
  \frac12\int_\Omega B_\ep(\varphi(x))\varphi(x)\,\xd x
  \leq\frac12\|B_\ep(\varphi)\|_H\|\varphi\|_H\,,
  \]
  so that $W_\ep\embed V_\ep$ continuously. 
  The density of $V_\ep$ in $H$ follows from the density 
  of $C^{0,\sigma}(\overline\Omega)$ in $H$
  and the fact that $C^{0,\sigma}(\overline\Omega)
  \subset W_\ep\subset V_\ep$. 
  
  The monotonicity of $B_\ep$ is a direct consequence of its definition. We proceed by showing that it is maximal monotone.
  Let $\varphi\in H$. For every $\lambda,\delta>0$
  the elliptic problem
  \begin{equation}
      \label{eq:elliptic}
  \varphi_{\delta\lambda} + 
  \lambda\Delta^2\varphi_{\delta\lambda}
  +\delta B_\ep(\varphi_{\delta\lambda})=\varphi
  \end{equation}
  admits a unique solution 
  $\varphi_{\delta\lambda}\in W\embed 
  C^{0,1/4}(\overline\Omega)\embed W_\ep$.
  Fix $\delta>0$.
  Testing \eqref{eq:elliptic} by $\varphi_{\delta\lambda}$ and
  using the monotonicity of $B_\ep$ and
  the Young inequality,
  it follows that 
  \[
  \|\varphi_{\delta\lambda}\|_H^2
  +\lambda\|\Delta\varphi_{\delta\lambda}\|_H^2
  \leq \frac12\|\varphi\|_H^2+
  \frac12\|\varphi_{\delta\lambda}\|_H^2
  \quad\forall\,\lambda>0\,.
  \]
  Thus, by comparison there exists a positive 
  constant $M$ such that
  \[
  \|\varphi_{\delta\lambda}\|_H^2
  +\lambda\|\Delta\varphi_{\delta\lambda}\|_H^2
  +\|B_\ep(\varphi_{\delta\lambda})\|_{W^*}\leq M
  \quad\forall\,\lambda>0\,.
  \]
  We infer that there exist $\varphi_\delta\in H$ and $\eta_\delta\in W^*$ such that, as $\lambda\searrow0$,
  $\lambda\varphi_{\delta\lambda}\to 0$
  in $W$, 
  $\varphi_{\delta\lambda}\rightharpoonup\varphi_\delta$
  in $H$, and $B_\ep(\varphi_{\delta\lambda})
  \rightharpoonup \eta_\delta$ in $W^*$, from
  which $\varphi_\delta + \delta\eta_\delta =\varphi$.
  It follows by comparison that $\eta_\delta\in H$.
  For all $\zeta\in W$, 
  by the symmetry of $B_\ep$ there holds
  \[
  (\eta_\delta,\zeta)_H=\lim_{\lambda\to0}
  (B_\ep(\varphi_{\delta\lambda}),\zeta)_H=
  \lim_{\lambda\to0}
  (\varphi_{\delta\lambda}, B_\ep(\zeta))_H
  =(\varphi_\delta, B_\ep(\zeta))_H=
  (B_\ep(\varphi_\delta), \zeta)_H\,,
  \]
  so we conclude that $\varphi_\delta\in W_\ep$ and
  $\eta_\delta=B_\ep(\varphi_\delta)$.
  Hence,
  \begin{equation}
  \label{eq:phi-delta}
      \varphi_\delta + \delta B_\ep(\varphi_\delta) =
  \varphi\qquad\forall\,\delta>0\,.
  \end{equation}
  This proves that $B_\ep$ is a maximal monotone
  operator on $H$
  (see \cite[Thm.~2.2]{barbu-monot}).
  Testing now \eqref{eq:phi-delta} by $\varphi_\delta$ and using Young inequality
  it is immediate to see that
  \begin{equation}
  \label{eq:bd-n-H}
  \frac12\|\varphi_\delta\|_H^2 + 
  \delta(B_\ep\varphi_\delta, \varphi_\delta)_H\leq
  \frac12\|\varphi\|_H^2\,.
  \end{equation}
  If additionally $\varphi\in V_\ep$, 
  testing \eqref{eq:phi-delta} by $B_\ep(\varphi_\delta)$
  and using H\"older and Young inequalities
  yields
  \begin{align}
  \label{eq:bd-en}
  2 E_\ep(\varphi_\delta) +
  \delta\|B_\ep(\varphi_\delta)\|_H^2&=
  (B_\ep(\varphi_\delta), \varphi_\delta)_H +
  \delta\|B_\ep(\varphi_\delta)\|_H^2
  =(B_\ep(\varphi_\delta), \varphi)_H\\
  &\nonumber=\frac12\int_\Omega\int_\Omega
  K_\ep(x,y)(\varphi_\delta(x)-\varphi_\delta(y))
  (\varphi(x)-\varphi(y))\,\xd x\,\xd y\\
  &\nonumber\leq2
  \sqrt{E_\ep(\varphi)}\sqrt{E_\ep(\varphi_\delta)}\leq
  E_\ep(\varphi_\delta) +
  E_\ep(\varphi)\,.
  \end{align}
  We deduce that, as $\delta\searrow0$, 
  $\delta B_\ep(\varphi_\delta)\to 0$ in $H$. Hence, by \eqref{eq:phi-delta},
  $\varphi_\delta\to \varphi$ in $H$. By combining \eqref{eq:bd-n-H} and \eqref{eq:bd-en}, we obtain that 
  $\|\varphi_\delta\|_{V_\ep}\leq\|\varphi\|_{V_\ep}$
  for every $\delta >0$. As $V_\ep$ is uniformly convex,
  this implies that $\varphi_\delta\to \varphi$ in $V_\ep$,
  so that $W_\ep\embed V_\ep$ densely.\\
  
  {\sc Step 4}:
  property {\em (5)}.
  For every $\varphi\in W_\ep$ and $\zeta\in V_\ep$,
  by the H\"older inequality we have
  \[
  \left(B_\ep(\varphi),\zeta\right)_H
  =\frac12\int_\Omega\int_\Omega
  K_\ep(x,y)(\varphi(x)-\varphi(y))(\zeta(x)-\zeta(y))
  \,\xd x\,\xd y
  \leq 
  2\sqrt{E_\ep(\varphi)}\sqrt{E_\ep(\zeta)}\,.
  \]
  This implies that for every $\varphi\in W_\ep$, the 
  operator
  \[
  \zeta\mapsto(B_\ep(\varphi),\zeta)_H\,, 
  \quad \zeta\in V_\ep\,,
  \]
  is linear and continuous on $V_\ep$, and such that 
  \[
  \|\zeta\mapsto(B_\ep(\varphi),\zeta)_H\|_{V_\ep^*}\leq
  \|\varphi\|_{V_\ep} \qquad\forall\,\varphi\in W_\ep\,.
  \]
  Since $W_\ep\embed V_\ep$ is dense, we deduce that $B_\ep$
  extends to a bounded linear operator from $V_\ep$ to
  $V_\ep^*$, and the thesis follows.\\
  
  {\sc Step 5}:
  property {\em (6)}. We observe that $E_\ep:V_\ep\to[0,+\infty)$ is convex 
  and lower semicontinuous. A direct computation also 
  shows that $DE_\ep=B_\ep$ in the sense of G\^ateaux:
  since $B_\ep:V_\ep\to V_\ep^*$ is linear and continuous,
  the thesis follows.
\end{proof}

The next lemma shows some boundedness properties of the 
family $(B_\ep)_\ep$, uniformly in $\ep$.
\begin{lemma}
\label{lemma:other-prop}
  The following inclusion is continuous
  \[
  V\embed V_\ep\,,
  \]
  and there exists a constant $C$, independent of $\ep$, such that
  \[
  \|\varphi\|_{V_\ep}\leq C\|\varphi\|_{V} \quad\forall\,\varphi\in V\,.
  \]
  For every $\varphi,\zeta\in V$, there holds
  \begin{equation}
  \label{eq:limits}
  \lim_{\ep\searrow0}E_\ep(\varphi)=
  \frac12\int_\Omega|\nabla\varphi(x)|^2\xd x\,,\qquad
  \lim_{\ep\searrow0}\langle B_\ep(\varphi_1),
  \varphi_2\rangle_{V_\ep}=
  \int_\Omega\nabla\varphi_1(x)\cdot
  \nabla\varphi_2(x)\,\xd x\,.
  \end{equation}
  Finally, for every $\varphi\in H$ and for every 
  sequence $(\varphi_\ep)_{\ep>0}\subset H$ 
  with
  $\varphi_\ep\to\varphi$ in $H$, we have
  \[
    \liminf_{\ep\searrow0}E_\ep(\varphi_\ep)
    \geq
    E(\varphi):=
    \begin{cases}
  \frac12\int_\Omega|\nabla\varphi(x)|^2\,\xd x
  \quad&\text{if } \varphi\in V\,,\\
  +\infty &\text{if } \varphi\in H\setminus V\,.
  \end{cases}\,
  \]
  In other words, $(E_\ep)_{\ep>0}$
  $\Gamma$-converges to $E$ with respect 
  to the norm-topology of $H$.
\end{lemma}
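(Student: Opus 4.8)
The plan is to establish the three assertions in turn, the genuinely new work being the uniform continuity of the embedding $V\embed V_\ep$; the remaining two items reduce to a polarization trick and to the cited Bourgain--Brezis--Mironescu--Ponce theory. For the first assertion, since $\Omega$ is smooth it is an extension domain, so I would fix a bounded extension operator $\mathcal E:H^1(\Omega)\to H^1(\mathbb{R}^d)$ and set $\tilde\varphi:=\mathcal E\varphi$ for $\varphi\in V$. Because the integrand defining $E_\ep$ is nonnegative and $\tilde\varphi=\varphi$ on $\Omega$, one has $E_\ep(\varphi)\leq\frac14\int_{\mathbb{R}^d}\int_{\mathbb{R}^d}\frac{\rho_\ep(|x-y|)}{|x-y|^2}|\tilde\varphi(x)-\tilde\varphi(y)|^2\,\xd x\,\xd y$. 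Passing to the variable $z=x-y$ and invoking the classical difference-quotient inequality $\int_{\mathbb{R}^d}|\tilde\varphi(\cdot+z)-\tilde\varphi|^2\leq|z|^2\|\nabla\tilde\varphi\|_{L^2(\mathbb{R}^d)}^2$, the factor $|z|^2$ exactly cancels the singularity $|z|^{-2}$ and produces $\frac14\|\nabla\tilde\varphi\|_{L^2(\mathbb{R}^d)}^2\int_{\mathbb{R}^d}\rho_\ep(|z|)\,\xd z=\frac{|S^{d-1}|}{2C_d}\|\nabla\tilde\varphi\|_{L^2(\mathbb{R}^d)}^2$, where the radial integral $|S^{d-1}|\int_0^{+\infty}\rho_\ep(r)r^{d-1}\,\xd r=\frac{2|S^{d-1}|}{C_d}$ is independent of $\ep$ by assumption. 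Combined with $\|\varphi\|_{V_\ep}^2=\|\varphi\|_H^2+2E_\ep(\varphi)$ and the boundedness of $\mathcal E$, this gives $\|\varphi\|_{V_\ep}\leq C\|\varphi\|_V$ with $C$ independent of $\ep$. The delicate point here, and the one true obstacle, is the \emph{uniformity}: integrating directly over the non-convex product $\Omega\times\Omega$ would force the segment joining $x$ and $y$ to possibly leave $\Omega$, so routing through $\mathbb{R}^d$ via the extension is what makes the difference-quotient estimate available at the cost of only the (uniform) extension constant.

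For the limits in \eqref{eq:limits}, the first identity is precisely the pointwise Bourgain--Brezis--Mironescu convergence established by Ponce: the normalization $\int_0^{+\infty}\rho_\ep(r)r^{d-1}\,\xd r=\frac{2}{C_d}$ with $C_d=\int_{S^{d-1}}|e_1\cdot\sigma|^2\,\xd\mathcal H^{d-1}(\sigma)$ is chosen exactly so that $E_\ep(\varphi)\to\frac12\int_\Omega|\nabla\varphi|^2$ for every $\varphi\in V$, and I would invoke \cite{ponce04,ponce} directly. The second identity I would then deduce by polarization. Recalling from Lemma~\ref{lemma:properties}(6) that $\langle B_\ep(\varphi_1),\varphi_2\rangle_{V_\ep}$ coincides with the symmetric bilinear form $\frac12\int_\Omega\int_\Omega K_\ep(x,y)(\varphi_1(x)-\varphi_1(y))(\varphi_2(x)-\varphi_2(y))\,\xd x\,\xd y$, whose diagonal equals $2E_\ep(\varphi)$, one obtains the algebraic identity $\langle B_\ep(\varphi_1),\varphi_2\rangle_{V_\ep}=E_\ep(\varphi_1+\varphi_2)-E_\ep(\varphi_1)-E_\ep(\varphi_2)$. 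Since $\varphi_1,\varphi_2$ and $\varphi_1+\varphi_2$ all lie in $V$, applying the first limit to each of the three terms yields $\frac12\int|\nabla(\varphi_1+\varphi_2)|^2-\frac12\int|\nabla\varphi_1|^2-\frac12\int|\nabla\varphi_2|^2=\int_\Omega\nabla\varphi_1\cdot\nabla\varphi_2$, which is the claim.

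Finally, the $\liminf$ inequality is the $\Gamma$-lower-bound half of Ponce's $\Gamma$-convergence theorem recalled in the introduction: for any sequence $\varphi_\ep\to\varphi$ in $H$ it gives $\liminf_{\ep\searrow0}E_\ep(\varphi_\ep)\geq E(\varphi)$, with the convention $E(\varphi)=+\infty$ when $\varphi\in H\setminus V$, and I would cite \cite{ponce} for it. To upgrade this to the full $\Gamma$-convergence statement it remains only to exhibit a recovery sequence: for $\varphi\in V$ the constant choice $\varphi_\ep\equiv\varphi$ satisfies $E_\ep(\varphi)\to\frac12\int_\Omega|\nabla\varphi|^2=E(\varphi)$ by the first limit of the previous paragraph, whereas for $\varphi\in H\setminus V$ the corresponding $\Gamma$-$\limsup$ bound is vacuous since $E(\varphi)=+\infty$. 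Combining the lower bound with this recovery sequence gives the asserted $\Gamma$-convergence of $(E_\ep)_{\ep>0}$ to $E$ in the norm topology of $H$, completing the proof. In summary, the only substantive computation is the uniform estimate of the first paragraph; the rest follows either from polarization or from the cited results, whose structural hypotheses on $(\rho_\ep)_\ep$ hold by assumption.
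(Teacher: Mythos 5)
Your proposal is correct, but it is noticeably more self-contained than the paper's proof, which is essentially a chain of citations. For the uniform bound $E_\ep(\varphi)\leq C\|\nabla\varphi\|_H^2$ the paper simply invokes \cite{ponce}, whereas you reprove it from scratch via an extension operator $H^1(\Omega)\to H^1(\mathbb{R}^d)$, the translation estimate $\|\tilde\varphi(\cdot+z)-\tilde\varphi\|_{L^2(\mathbb{R}^d)}^2\leq|z|^2\|\nabla\tilde\varphi\|_{L^2(\mathbb{R}^d)}^2$, and the $\ep$-independent normalization $\int_{\mathbb{R}^d}\rho_\ep(|z|)\,\xd z=2|S^{d-1}|/C_d$; this is exactly the mechanism behind Ponce's estimate, and your observation that the extension is what circumvents the non-convexity of $\Omega$ is the right one. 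For the second limit in \eqref{eq:limits} the paper defers to \cite[\S~1]{DRST}, while your polarization identity $\langle B_\ep(\varphi_1),\varphi_2\rangle_{V_\ep}=E_\ep(\varphi_1+\varphi_2)-E_\ep(\varphi_1)-E_\ep(\varphi_2)$ (legitimate, since $E_\ep$ is a quadratic form with $DE_\ep=B_\ep$ by Lemma~\ref{lemma:properties}(6), and $V$ is a vector space so all three terms converge by the first limit) reduces it to the scalar case in three lines --- arguably cleaner than the reference. For the $\Gamma$-$\liminf$ both you and the paper rely on Ponce; the only point where the paper is more careful is that \cite[Thm.~8]{ponce04} is stated with the lower semicontinuous envelope of the Dirichlet energy restricted to $C^1(\overline\Omega)$, so the paper explicitly records the (standard) identification $\operatorname{sc--}\tilde E=E$, a step you gloss over by quoting the statement already in its relaxed form; your explicit recovery-sequence remark (constant sequences, using the pointwise limit) is a small addition the paper leaves implicit in the phrase ``in other words''. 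Net effect: same mathematical content, but your route trades literature dependence for short direct arguments, at the cost of one slightly imprecise citation in the last step.
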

\begin{proof}
  By \cite{ponce}, there is a constant $C>0$
  independent of $\ep$ such that 
  \[
  E_\ep(\varphi)\leq C\|\nabla\varphi\|_H^2
  \qquad\forall\,\varphi\in V\,,
  \]
  from which the first part of the thesis follows directly. The first
  limit in \eqref{eq:limits} is also a direct consequence of \cite{ponce},
  the second limit in \eqref{eq:limits} can be proved exactly in the
  same way as \cite[\S~1]{DRST}.
  Finally, by the $\Gamma$-convergence result in
  \cite[Thm.~8]{ponce04}, we know that 
  \[
  \liminf_{\ep\searrow0}E_\ep(\varphi_\ep)\geq
  \operatorname{sc--}\tilde E(\varphi)\,,
  \]
  where $\operatorname{sc--}\tilde E$
  is the lower semicontinuous envelope of
  \[
  \tilde E:H\to[0,+\infty]\,, \qquad
  \tilde E(\varphi):=
    \begin{cases}
  \frac12\int_\Omega|\nabla\varphi(x)|^2\,\xd x
  \quad&\text{if } \varphi\in C^1(\overline\Omega)\,,\\
  +\infty &\text{otherwise }\,,
  \end{cases}
  \]
  i.e.
  \[
  \operatorname{sc--}\tilde E(\varphi)=
  \inf\left\{\liminf_{n\to\infty}
  \tilde E(\zeta_n): \zeta_n\to\varphi \quad\text{in } H\right\}\,.
  \]
  It is a standard matter to check that 
  $\operatorname{sc--}\tilde E=E$, so that 
  the thesis follows.
\end{proof}

The last result of this section is 
a compactness criterion involving
the family of operators $(E_\ep)_\ep$.
The following lemma is fundamental as
we do not have any compactness properties 
for the inclusions 
of the spaces $V_\ep$ and $W_\ep$.
For the proof we refer to \cite[Lemma.~4]{DRST}.
\begin{lemma}
\label{lemma:delta-est}
For every $\delta>0$ there exist two constants $C_{\delta}>0$ and $\ep_\delta>0$ such that,
for every sequence 
$(\varphi_\ep)_{\ep\in(0,\ep_\delta)}\subset V_\ep$ there holds
\[
    \|\varphi_{\ep_1}-\varphi_{\ep_2}\|^2_{H}\leq 
    \delta \left(E_{\ep_1}(\varphi_{\ep_1})
    + E_{\ep_2}(\varphi_{\ep_2})\right)
    +C_{\delta}
    \|\varphi_{\ep_1}-\varphi_{\ep_2}\|^2_{V^*}
    \qquad\forall\,\ep_1,\ep_2\in(0,\ep_\delta)\,.
\]
\end{lemma}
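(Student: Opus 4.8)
The plan is to argue by contradiction, reducing the quantitative inequality to a qualitative compactness property for families of functions with uniformly bounded nonlocal energies. Suppose the conclusion fails for some $\delta_0>0$. Testing the negated statement with $C=n$ and $\ep_0=1/n$, I obtain for every $n\in\mathbb{N}$ two parameters $\ep_1^n,\ep_2^n\in(0,1/n)$ and functions $\varphi_i^n\in V_{\ep_i^n}$ ($i=1,2$) such that
\[
\|\varphi_1^n-\varphi_2^n\|_H^2 > \delta_0\big(E_{\ep_1^n}(\varphi_1^n)+E_{\ep_2^n}(\varphi_2^n)\big) + n\,\|\varphi_1^n-\varphi_2^n\|_{V^*}^2 .
\]
Since the right-hand side is nonnegative, $\varphi_1^n-\varphi_2^n$ is nonzero in $H$; exploiting that $E_\ep$ is quadratic and the $H$- and $V^*$-norms are homogeneous, the inequality is invariant under rescaling both functions by a common factor, so I may normalize $\|\varphi_1^n-\varphi_2^n\|_H=1$. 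Subtracting from $\varphi_1^n$ and $\varphi_2^n$ the common mean value $(\varphi_1^n)_\Omega$ — which affects neither $E_{\ep_i^n}$ nor any norm of the difference — I may also assume $(\varphi_1^n)_\Omega=0$.

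From the rescaled inequality I then read off the two uniform bounds
\[
E_{\ep_1^n}(\varphi_1^n)+E_{\ep_2^n}(\varphi_2^n)<\tfrac1{\delta_0}, \qquad \|\varphi_1^n-\varphi_2^n\|_{V^*}^2<\tfrac1n .
\]
Next I would invoke the uniform nonlocal Poincaré--Wirtinger inequality of Ponce: there exist $C>0$ and $\bar\ep>0$ with $\|\varphi-\varphi_\Omega\|_H^2\le C\,E_\ep(\varphi)$ for all $\ep<\bar\ep$ and all $\varphi\in V_\ep$. As $\ep_1^n\to0$, for $n$ large this gives $\|\varphi_1^n\|_H^2\le C/\delta_0$, and then $\|\varphi_2^n\|_H\le\|\varphi_1^n\|_H+1$, so both sequences are bounded in $H$.

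The heart of the argument is then the Bourgain--Brezis--Mironescu / Ponce compactness theorem, which applies precisely because $\Omega$ is smooth (hence an extension domain) and the mollifiers $(\rho_\ep)_\ep$ satisfy the concentration assumptions of Section~\ref{s:main}: any sequence bounded in $H$ whose energies $E_{\ep_n}$ stay bounded along some $\ep_n\searrow0$ is relatively compact in $H$ (with limits in $V$). Applying this separately to $(\varphi_1^n)_n$ and $(\varphi_2^n)_n$ and passing to a subsequence yields $\varphi_i^n\to\psi_i$ strongly in $H$. Since $H\embed V^*$ continuously, $\|\varphi_1^n-\varphi_2^n\|_{V^*}\to0$ forces $\psi_1=\psi_2$, whereas strong $H$-convergence gives $\|\psi_1-\psi_2\|_H=\lim_n\|\varphi_1^n-\varphi_2^n\|_H=1$, a contradiction.

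I expect the main obstacle to lie in this last step: importing the BBM--Ponce compactness in a form robust enough to act on a bounded domain (rather than on $\mathbb{R}^d$) and to handle the two \emph{distinct} vanishing sequences $(\ep_i^n)_n$ with their own mollifier families simultaneously. A constructive alternative, which would instead produce an explicit $C_\delta$, is to mollify each $\varphi_{\ep_i}$ at a fixed scale $h=h(\delta)$, bound the smoothing error $\|\varphi_{\ep_i}-S_h\varphi_{\ep_i}\|_H^2$ and the smoothed gradient $\|\nabla S_h\varphi_{\ep_i}\|_H^2$ by $E_{\ep_i}(\varphi_{\ep_i})$ via BBM-type estimates, and close with Ehrling's inequality on $V\embed H\embed V^*$; there the delicate point is matching the mollification scale $h$ with the kernel scale $\ep$, choosing $h$ first and the interpolation parameter afterwards so that both the energy coefficient reaches $\delta$ and the $\|\cdot\|_H$-term can be absorbed.
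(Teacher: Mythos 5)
Your proof is correct and follows essentially the same route as the proof the paper invokes (it defers to \cite[Lemma.~4]{DRST}): argue by contradiction, normalize so that $\|\varphi_1^n-\varphi_2^n\|_H=1$ and $(\varphi_1^n)_\Omega=0$, extract the uniform energy and $V^*$ bounds, apply the uniform nonlocal Poincar\'e inequality of \cite{ponce04} to get $H$-boundedness, and conclude via the Bourgain--Brezis--Mironescu/Ponce compactness theorem applied separately to the two sequences. The compactness step you flag as the potential obstacle is indeed available in the required form on smooth bounded (extension) domains and for each vanishing sequence $(\ep_i^n)_n$ individually, so there is no gap.
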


\section{Main results}
\label{s:results}
Before stating our main results, we recall 
that the local Cahn-Hilliard equation
is well-posed in the following sense.

\begin{theorem}
  \label{th:wp_loc}
  Let $\tau\geq0$ and 
  \begin{align}
      \label{ip1_loc}
      &u_0 \in V\,, \qquad
      \hat\gamma(u_0)\in L^1(\Omega)\,, \qquad
      (u_0)_\Omega \in\operatorname{Int}D(\gamma)\,,\\
      \label{ip2_loc}
      &g\in L^2(0,T; H)\,, \qquad
      g\in H^1(0,T; H) \quad\text{if } \tau=0\,.
  \end{align}
  Then, there exists a triple 
  $(u,\mu,\xi)$ such that 
  \begin{align}
      \label{u_loc}
      &u \in H^1(0,T; V^*) \cap L^\infty (0,T; V)
      \cap L^2(0,T; W)\,,\qquad
      \tau u \in H^1(0,T; H)\,,\\
      \label{mu_loc}
      &\mu \in L^2(0,T; V)\,, \qquad
      \tau\mu\in L^2(0,T; W)\,,\\
      \label{xi_loc}
      &\xi \in L^2(0,T; H)\,, 
      \qquad \xi\in\gamma(u) \quad\text{a.e.~in } Q\,,\\
      \label{eq1_loc}
      &\partial_t u - \Delta\mu = 0 \quad\text{in } V^*\,,
      \quad\text{a.e.~in } (0,T)\,,\\
      \label{eq2_loc}
      &\mu=\tau\partial_t u - \Delta u + \xi + \Pi(u) - g
      \quad\text{a.e.~in } Q\,,\\
      \label{init_loc}
      &u(0)=u_0 \quad\text{a.e.~in } \Omega\,.
  \end{align}
  Moreover, the solution component
  $u$ is unique, and the solution 
  components $\mu$ and $\xi$
  are unique if $\gamma$ is single-valued.
\end{theorem}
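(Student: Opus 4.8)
The plan is to obtain existence through a double approximation scheme and then recover uniqueness by a contraction estimate in the dual norm of $V^*$. I would first regularize the maximal monotone graph $\gamma$ by its Yosida approximation $\gamma_\lambda$, which is single-valued, monotone, and Lipschitz, with convex $C^1$ primitive $\hat\gamma_\lambda\geq0$ satisfying $\hat\gamma_\lambda\leq\hat\gamma$. In the pure case $\tau=0$ I would additionally replace the viscosity coefficient by an artificial $\sigma>0$. For fixed $\lambda,\sigma>0$ the regularized nonlinearity $\gamma_\lambda+\Pi$ is globally Lipschitz and the viscosity is strictly positive, so the regularized system can be solved either by a Faedo--Galerkin scheme on the eigenspaces of the Neumann Laplacian, or directly by abstract maximal-monotone-operator theory in $V^*$; this produces an approximate triple $(u_{\lambda\sigma},\mu_{\lambda\sigma},\gamma_\lambda(u_{\lambda\sigma}))$.

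The heart of the argument is a collection of a priori bounds uniform in $\lambda$ and $\sigma$. Testing the mass equation with the constant $1$ and using the Neumann condition yields conservation of the spatial mean, so $(u_{\lambda\sigma}(t))_\Omega=(u_0)_\Omega$ for all $t$. Testing the mass equation with $\mu_{\lambda\sigma}$ and the chemical-potential equation with $\partial_t u_{\lambda\sigma}$ and combining them gives the dissipation identity
\[
\tfrac12\tfrac{d}{dt}\|\nabla u_{\lambda\sigma}\|_H^2+\tfrac{d}{dt}\int_\Omega\bigl(\hat\gamma_\lambda(u_{\lambda\sigma})+\hat\Pi(u_{\lambda\sigma})\bigr)+\sigma\|\partial_t u_{\lambda\sigma}\|_H^2+\|\nabla\mu_{\lambda\sigma}\|_H^2=\int_\Omega g\,\partial_t u_{\lambda\sigma}\,,
\]
which delivers $u_{\lambda\sigma}$ bounded in $L^\infty(0,T;V)$, $\nabla\mu_{\lambda\sigma}$ bounded in $L^2(0,T;H)$, and $\sqrt{\sigma}\,\partial_t u_{\lambda\sigma}$ bounded in $L^2(0,T;H)$. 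When $\tau>0$ we fix $\sigma=\tau$ and do not send it to zero, so $g\in L^2(0,T;H)$ suffices to absorb the right-hand side; when $\tau=0$ we must let $\sigma\searrow0$, and integrating $\int g\,\partial_t u_{\lambda\sigma}$ by parts in time (which is exactly why $g\in H^1(0,T;H)$ is assumed) produces $\sigma$-uniform bounds.

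The delicate point, which I expect to be the main obstacle, is the $L^1(Q)$ control of the reaction term $\xi_{\lambda\sigma}:=\gamma_\lambda(u_{\lambda\sigma})$, equivalently the control of the spatial mean of $\mu_{\lambda\sigma}$: the gradient bound alone does not control the full $L^2(0,T;V)$ norm. Here I would test the chemical-potential equation with $u_{\lambda\sigma}-(u_0)_\Omega$ and invoke the classical mechanism exploiting $(u_0)_\Omega\in\operatorname{Int}D(\gamma)$, namely a coercivity inequality of the form $\int_\Omega\gamma_\lambda(u_{\lambda\sigma})\bigl(u_{\lambda\sigma}-(u_0)_\Omega\bigr)\geq c_1\|\gamma_\lambda(u_{\lambda\sigma})\|_{L^1(\Omega)}-c_2$, valid uniformly in $\lambda$ precisely because the conserved mean stays strictly inside $D(\gamma)$. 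This gives the $L^1$ bound on $\xi_{\lambda\sigma}$, hence on the mean of $\mu_{\lambda\sigma}$, and via Poincar\'e--Wirtinger the full bound $\mu_{\lambda\sigma}\in L^2(0,T;V)$. The sharper bound $\xi_{\lambda\sigma}\in L^2(0,T;H)$ then follows by testing the elliptic part $-\Delta u_{\lambda\sigma}+\gamma_\lambda(u_{\lambda\sigma})=f_{\lambda\sigma}$ with $\gamma_\lambda(u_{\lambda\sigma})$ and using $\int_\Omega\nabla u_{\lambda\sigma}\cdot\nabla\gamma_\lambda(u_{\lambda\sigma})=\int_\Omega\gamma_\lambda'(u_{\lambda\sigma})|\nabla u_{\lambda\sigma}|^2\geq0$; Neumann elliptic regularity then upgrades this to $u_{\lambda\sigma}\in L^2(0,T;W)$.

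With these uniform bounds in hand I would pass to the limit. Weak and weak-star compactness give $u_{\lambda\sigma}\rightharpoonup u$, $\mu_{\lambda\sigma}\rightharpoonup\mu$, $\xi_{\lambda\sigma}\rightharpoonup\xi$ in the respective spaces, while the Aubin--Lions lemma, based on $u_{\lambda\sigma}$ bounded in $L^2(0,T;W)$ and $\partial_t u_{\lambda\sigma}=\Delta\mu_{\lambda\sigma}$ bounded in $L^2(0,T;V^*)$, yields strong convergence of $u_{\lambda\sigma}$ in $L^2(0,T;H)$. Strong convergence combined with the maximal monotonicity of $\gamma$ identifies $\xi\in\gamma(u)$ a.e. in $Q$ by the standard $\limsup$ argument, and continuity of $\Pi$ handles the Lipschitz term. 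Finally, uniqueness of $u$ follows by writing the equations for the difference of two solutions, which has zero mean, and testing the difference of the mass equations with $\mathcal N(u_1-u_2)$: the monotonicity of $\gamma$ makes the difference of the $\xi$-terms nonnegative, the $C_\Pi$-Lipschitz bound on $\Pi$ is absorbed after interpolating $\|\cdot\|_H$ between $\|\cdot\|_V$ and $\|\cdot\|_{V^*}$, and a Gronwall argument in the $V^*$-norm gives $u_1=u_2$; uniqueness of $\mu$ and $\xi$ when $\gamma$ is single-valued then follows by comparison in the equation.
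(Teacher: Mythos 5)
The paper offers no proof of Theorem~\ref{th:wp_loc} at all: its entire ``proof'' is a pointer to \cite{col-gil-spr}, where the result is established in a more general setting. Your proposal therefore cannot coincide with an argument in the paper, but it is a correct reconstruction of the standard argument behind that citation, and it runs closely parallel to the paper's own proof of the \emph{nonlocal} analogue (Theorem~\ref{th:wp_nloc}, Section~\ref{proof1}): Yosida regularization plus (artificial) viscosity, the energy estimate from testing with $\mu$ and $\partial_t u$ (with the time integration by parts of $\int g\,\partial_t u$ exactly when $\tau=0$, which is why \eqref{ip2_loc} asks $g\in H^1(0,T;H)$ in that case), conservation of the mean together with the $\operatorname{Int}D(\gamma)$ coercivity inequality to control $\|\gamma_\lambda(u)\|_{L^1(\Omega)}$ and hence $(\mu)_\Omega$, the $L^2$ bound on $\gamma_\lambda(u)$ by testing the elliptic part against it, Neumann elliptic regularity, Aubin--Lions compactness with monotone identification of $\xi$, and uniqueness via $\mathcal N$ and Gronwall --- all of these steps appear, in nonlocal guise, in the paper's Section~\ref{proof1}. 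Two points you should tighten, neither of which is a substantive gap: (i) you work with a double approximation $(\lambda,\sigma)$, so either order the limits ($\lambda\searrow0$ at fixed $\sigma$, then $\sigma\searrow0$) or observe explicitly that all bounds are uniform in both parameters so a diagonal extraction suffices; (ii) the regularity $\tau\mu\in L^2(0,T;W)$ claimed in \eqref{mu_loc} is not among the bounds you derive --- it follows a posteriori by comparison in \eqref{eq1_loc}: when $\tau>0$ one has $\partial_t u\in L^2(0,T;H)$, hence $\Delta\mu\in L^2(0,T;H)$, and Neumann elliptic regularity upgrades $\mu\in L^2(0,T;V)$ to $L^2(0,T;W)$.
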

\begin{proof}
We refer to \cite{col-gil-spr} for a proof in a 
more general setting.
\end{proof}

The first result of this paper is the well-posedness of
the nonlocal viscous Cahn-Hilliard equation 
complemented by Neumann boundary conditions
for the chemical potential. 

\begin{theorem}
 \label{th:wp_nloc}
  Let $\ep>0$ and $\tau_\ep>0$ be fixed.
  Then for every $(u_{0,\ep},g_\ep)$ with
  \begin{align}
      \label{ip1_nloc}
      &u_{0,\ep} \in V_\ep\,, \qquad
      \hat\gamma(u_{0,\ep})\in L^1(\Omega)\,, \qquad
      (u_{0,\ep})_\Omega \in
      \operatorname{Int}D(\gamma)\,,\\
      \label{ip2_nloc}
      &g_\ep \in L^2(0,T; H)\,,
  \end{align}
  there exists a triple
  $(u_\ep,\mu_\ep,\xi_\ep)$ such that 
  \begin{align}
  \label{u_nloc}
  &u_\varepsilon \in H^1(0,T; H) \cap
  L^\infty(0,T; V_\ep) \cap L^2(0,T; W_\ep)\,,\\
  \label{mu_nloc}
  &\mu_\ep \in L^2(0,T; W)\,,\\
  \label{xi_nloc}
  &\xi_\ep \in L^2(0,T; H)\,, \qquad
  \xi_\ep\in\gamma(u_\ep) \quad\text{a.e.~in } Q\,,\\
  \label{eq1_nloc}
  &\partial_t u_\ep - \Delta \mu_\ep = 0
  \quad\text{a.e.~in } Q\,,\\
  \label{eq2_nloc}
  &\mu_\ep=\tau_\ep \partial_t u_\ep
  + B_\ep(u_\ep) +\xi_\ep +\Pi(u_\ep) -g_\ep
  \quad\text{a.e.~in } Q\,,\\
  \label{init_nloc}
  &u_\ep(0)=u_{0,\ep} \quad\text{a.e.~in } \Omega\,.
  \end{align}
  Furthermore, there exists a positive constant $M_\ep$
  such that, for every sets of data
  $(u_{0,\ep}^1, g_\ep^1)$ and $(u_{0,\ep}^2,g_\ep^2)$
  satisfying \eqref{ip1_nloc}--\eqref{ip2_nloc}, with 
  $(u_{0,\ep}^1)_\Omega=(u_{0,\ep}^2)_\Omega$,
  and for every respective solutions
  $(u_\ep^1, \mu_\ep^1, \xi_\ep^1)$
  and $(u_\ep^2, \mu_\ep^2, \xi_\ep^2)$
  satisfying \eqref{u_nloc}--\eqref{init_nloc}, it holds
  \begin{align*}
  &\|u_\ep^1-u_\ep^2\|_{C^0([0,T]; V^*)}^2+\tau_\ep\|u_\ep^1-u_\ep^2\|_{C^0([0,T]; H)}^2
  +\|E_\ep(u_\ep^1-u_\ep^2)\|_{L^1(0,T)}\\
  &\qquad\leq M_\ep
  \left(\|u_{0,\ep}^1-u_{0,\ep}^2\|_{V^*}^2+\tau_\ep \|u_{0,\ep}^1-u_{0,\ep}^2\|_{H}^2
  +\|g_\ep^1-g_\ep^2\|_{L^2(0,T; V^*)}^2\right)\,.
  \end{align*}
  In particular, the solution component
  $u_\ep$ is unique, and the solution 
  components $\mu_\ep$ and $\xi_\ep$
  are unique if $\gamma$ is single-valued.
\end{theorem}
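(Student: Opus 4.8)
The plan is to build a solution by a double approximation and to exploit the strict positivity of $\tau_\ep$ twice: once to close the estimates, once to recover compactness. First I would regularize the singular potential by the Yosida approximation $\gamma_\lambda$ (so that $\gamma_\lambda+\Pi$ is globally Lipschitz) and add an elliptic local term $-\sigma\Delta u_\ep$ to the chemical potential \eqref{eq2_nloc}. For fixed $\sigma,\lambda>0$ the resulting auxiliary system is a viscous Cahn--Hilliard problem in which the genuine Laplacian $-\sigma\Delta$ restores $V$- and $W$-regularity and $V\embed H$ compactly; since $B_\ep$ is maximal monotone on $H$ and bounded $V_\ep\to V_\ep^*$ by Lemma~\ref{lemma:properties}(4)--(5), its well-posedness follows from standard monotone-operator / Faedo--Galerkin arguments. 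I would then let $\lambda\searrow0$ with $\sigma$ fixed, using the available $V$-compactness to identify $\gamma_\lambda(u)\to\xi\in\gamma(u)$ by Minty's trick, and finally pass to the limit $\sigma\searrow0$.

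\textbf{Uniform estimates.} The central a priori bound is the energy estimate: testing \eqref{eq1_nloc} by $\mu_\ep$ and \eqref{eq2_nloc} by $\partial_t u_\ep$ and adding, the duality terms cancel and I obtain
\begin{equation*}
\frac{\xd}{\xd t}\Big(E_\ep(u_\ep)+\int_\Omega\hat\gamma(u_\ep)+\int_\Omega\hat\Pi(u_\ep)\Big)+\|\nabla\mu_\ep\|_H^2+\tau_\ep\|\partial_t u_\ep\|_H^2=(g_\ep,\partial_t u_\ep)_H .
\end{equation*}
Here the positivity of $\tau_\ep$ is essential: the forcing is absorbed through $(g_\ep,\partial_t u_\ep)_H\le\frac{\tau_\ep}{2}\|\partial_t u_\ep\|_H^2+\frac1{2\tau_\ep}\|g_\ep\|_H^2$, which requires no $H^1$-estimate on $u_\ep$. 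This yields uniform bounds for $u_\ep$ in $L^\infty(0,T;V_\ep)\cap H^1(0,T;H)$ and for $\nabla\mu_\ep$ in $L^2(0,T;H)$. The mean value $(\mu_\ep)_\Omega$ and the $L^1$-bound on $\xi_\ep$ I would recover by the classical argument based on $(u_{0,\ep})_\Omega\in\operatorname{Int}D(\gamma)$, testing \eqref{eq2_nloc} by $u_\ep-(u_\ep)_\Omega$ and using $\int_\Omega B_\ep(u_\ep)=0$. A further testing by $\gamma_\lambda(u_\ep)$, together with $(B_\ep(u_\ep),\gamma_\lambda(u_\ep))_H\ge0$, promotes $\xi_\ep$ to $L^2(0,T;H)$; comparison in \eqref{eq2_nloc} then gives $B_\ep(u_\ep)\in L^2(0,T;H)$, i.e. $u_\ep\in L^2(0,T;W_\ep)$, while elliptic regularity for $-\Delta\mu_\ep=-\partial_t u_\ep$ with $\partial_{\bf n}\mu_\ep=0$ yields $\mu_\ep\in L^2(0,T;W)$.

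\textbf{The main obstacle: compactness.} The hard step is the passage $\sigma\searrow0$. Since $V_\ep\embed H$ is \emph{not} compact, the bounds above give no strong compactness of $u_\ep^\sigma$ in $L^2(Q)$ via Aubin--Lions, yet strong convergence is indispensable to identify the non-monotone term $\Pi(u_\ep)$. This is where I would use the viscosity a second time. Writing $\mathcal N$ for the inverse Neumann Laplacian and noting $\mathcal N\partial_t u_\ep=\mu_\ep-(\mu_\ep)_\Omega$, the system is equivalent to a subgradient flow $(\mathcal N+\tau_\ep\,\mathrm{id})\partial_t u_\ep+\partial\Phi_\sigma(u_\ep)\ni\tilde g_\ep$ whose leading operator is, thanks to $\tau_\ep>0$, coercive on the mean-zero subspace of $H$ (the metric $\langle(\mathcal N+\tau_\ep)\cdot,\cdot\rangle$ being equivalent to $\|\cdot\|_H^2$). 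The convex energies $\Phi_\sigma=E_\ep+\tfrac\sigma2\|\nabla\cdot\|_H^2+\int_\Omega\hat\gamma$ converge in the Mosco sense to $\Phi_0=E_\ep+\int_\Omega\hat\gamma$ as $\sigma\searrow0$, and the general stability of subgradient flows in a fixed coercive metric then delivers strong convergence of $u_\ep^\sigma$ in $C^0([0,T];H)$ \emph{without} any compact embedding --- this is the ``by hand'' strong $L^2$-convergence. With it, $\Pi(u_\ep^\sigma)\to\Pi(u_\ep)$ strongly, while the monotone contributions $B_\ep$ and $\gamma$ pass to the limit by weak convergence and maximal monotonicity, giving $\xi_\ep\in\gamma(u_\ep)$ and closing \eqref{u_nloc}--\eqref{init_nloc}.

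\textbf{Continuous dependence and uniqueness.} Finally, for two solutions with data of equal mean I would set $\bar u:=u_\ep^1-u_\ep^2$ (mean-zero), test the difference of \eqref{eq1_nloc} by $\mathcal N\bar u$ and substitute the difference of \eqref{eq2_nloc}, arriving at
\begin{equation*}
\frac12\frac{\xd}{\xd t}\big(\|\bar u\|_{V^*}^2+\tau_\ep\|\bar u\|_H^2\big)+2E_\ep(\bar u)+(\bar\xi,\bar u)_H=(\Pi(u_\ep^2)-\Pi(u_\ep^1),\bar u)_H+\langle\bar g_\ep,\bar u\rangle .
\end{equation*}
Here $2E_\ep(\bar u)\ge0$ and $(\bar\xi,\bar u)_H\ge0$ by monotonicity of $B_\ep$ and $\gamma$, the $\Pi$-term is controlled by $C_\Pi\|\bar u\|_H^2$, and the forcing is handled by Young's inequality against the $V^*$- and $E_\ep$-terms. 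Once more $\tau_\ep>0$ supplies the $\|\bar u\|_H^2$ coercivity needed to close a Gronwall argument, yielding exactly the stated estimate and hence uniqueness of $u_\ep$; uniqueness of $\mu_\ep$ and $\xi_\ep$ when $\gamma$ is single-valued then follows by comparison in \eqref{eq2_nloc}.
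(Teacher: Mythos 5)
Your approximation scheme, a priori estimates, and uniqueness argument all run parallel to the paper's proof: the paper likewise regularizes by a Yosida approximation of $\gamma$ plus an elliptic term $-\lambda\Delta$ (with a single parameter $\lambda$ playing both roles, rather than your two parameters $\sigma,\lambda$), derives exactly your energy identity and absorbs $(g_\ep,\partial_t u_\ep)_H$ through $\tau_\ep>0$, obtains the $L^1$-bound on the monotone term from $(u_{0,\ep})_\Omega\in\operatorname{Int}D(\gamma)$ by testing with $\mathcal N(u_\ep^\lambda-(u_{0,\ep})_\Omega)$ and $u_\ep^\lambda-(u_{0,\ep})_\Omega$, promotes it to $L^2(0,T;H)$ by testing with $\gamma_\lambda(u_\ep^\lambda)$, and closes continuous dependence with precisely your $\mathcal N$-duality computation.

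The genuine gap is the step where you produce strong $L^2$-compactness, i.e. the limit $\sigma\searrow0$. You invoke Mosco convergence of $\Phi_\sigma=E_\ep+\tfrac\sigma2\|\nabla\cdot\|_H^2+\int_\Omega\hat\gamma$ to $\Phi_0=E_\ep+\int_\Omega\hat\gamma$ plus abstract stability of subgradient flows. But $\Phi_\sigma\equiv+\infty$ outside $V$ and $\Phi_\sigma\geq\Phi_0$, so the family is monotone and its Mosco limit is the lower semicontinuous envelope of the restriction $\Phi_0|_{V}$; claiming that this envelope equals $\Phi_0$ on all of $D(\Phi_0)$ is exactly a recovery-sequence statement: every $u\in V_\ep$ with $\hat\gamma(u)\in L^1(\Omega)$ must admit $u_n\in V$ with $u_n\to u$ in $H$, $E_\ep(u_n)\to E_\ep(u)$, and $\int_\Omega\hat\gamma(u_n)\to\int_\Omega\hat\gamma(u)$. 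For the singular kernels \eqref{kernel} on a bounded domain no such density of $V$ in $V_\ep$ is known (the mollifiers $\rho_\ep$ satisfy no structural assumptions allowing one to control $E_\ep$ under dilation or mollification near $\partial\Omega$), it is proved nowhere in the paper, and avoiding any statement of this kind is one of the structural points of the paper's argument. If the envelope is strictly below $\Phi_0$ somewhere, your abstract limit solves the gradient flow of the wrong functional. A second, smaller defect of the same step: in your subgradient-flow reformulation the forcing contains $\Pi(u_\ep^\sigma)$, so the strong $L^2(0,T;H)$ convergence of the forcing required by the stability theorem is circular with the conclusion (repairable by a Gronwall loop, but not automatic).

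What the paper does instead is self-contained and needs neither recovery sequences nor compact embeddings: it passes to the limit $\lambda\searrow0$ using only weak compactness (Aubin--Lions in $C^0([0,T];V^*)$) and weak lower semicontinuity of $E_\ep$, and it identifies the nonlinearity \emph{first}, via a shifted-monotonicity trick that is the second genuine use of $\tau_\ep>0$: choosing $\alpha=\frac{2}{\tau_\ep}\|\Pi'\|_{L^\infty(\mathbb{R})}$ makes $\gamma+\Pi+\alpha\tau_\ep\,\mathrm{Id}$ maximal monotone, and a limsup argument on the equation multiplied by $e^{-\alpha t}$ yields the inclusion for the weak limits of $\gamma_\lambda(u_\ep^\lambda)+\Pi(u_\ep^\lambda)+\alpha\tau_\ep u_\ep^\lambda$; only afterwards is strong convergence in $C^0([0,T];H)\cap L^2(0,T;V_\ep)$ proved by an energy estimate on differences. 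Replacing your Mosco step by this $e^{-\alpha t}$ argument would repair your two-parameter scheme. Finally, a minor point: in the continuous-dependence estimate the term $\int_{Q_t}(g_\ep^1-g_\ep^2)(u_\ep^1-u_\ep^2)$ cannot be estimated by Young's inequality ``against the $V^*$- and $E_\ep$-terms,'' since $u_\ep^1-u_\ep^2$ need not belong to $V$ and $E_\ep$ does not control the $V$-norm; one estimates it in $H$ and runs Gronwall on the $\tau_\ep\|\cdot\|_H^2$ term, which is what makes the constant $M_\ep$ degenerate as $\tau_\ep\searrow0$.
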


Our second contribution concerns the 
nonlocal-to-local convergence. 
In particular, we show that, under suitable assumptions 
on the initial data $(u_{0,\ep})_{\ep}$ and on the
forcing terms $(g_\ep)_\ep$,
if the viscosities $(\tau_\ep)_\ep$ 
converge to a coefficient $\tau\geq0$, then the
solutions to the respective viscous nonlocal Cahn-Hilliard equations converge, in suitable topologies, 
to the solutions to the
limiting local Cahn-Hilliard equation
with viscosity parameter $\tau\geq0$.
Note that the viscosities $(\tau_\ep)_\ep$
are required to be strictly positive for all $\ep>0$,
whereas the limiting viscosity parameter $\tau$
is also allowed to vanish.
Hence, 
such result has a duplex formulation.
Indeed, if $\tau>0$ this shows the asymptotic convergence
of the nonlocal {\em viscous} equation to the corresponding
local {\em viscous} equation, while if $\tau=0$
this proves the approximability of solutions to 
the local {\em pure} equation by 
solutions to nonlocal equations with vanishing 
viscosities.

\begin{theorem}\label{th:conv}
Assume that
\[
\tau\geq0\,,\qquad 
(\tau_\ep)_{\ep>0}\subset(0,+\infty)\,,
\qquad
\lim_{\ep\searrow0}\tau_\ep=\tau\,.
\]
Let the data 
$(u_0,g)$ satisfy \eqref{ip1_loc}--\eqref{ip2_loc},
and let 
the family $(u_{0,\ep}, g_\ep)_{\ep>0}$
satisfy \eqref{ip1_nloc}--\eqref{ip2_nloc}
for all $\ep>0$.
Assume also that there exists
$\ep_0>0$ such that
\begin{align}
    \label{ip1_conv}
    &\sup_{\ep\in(0,\ep_0)}\left(
    \|u_{0,\ep}\|_{V_\ep}^2 + \|\hat\gamma(u_{0,\ep})\|_{L^1(\Omega)}
    \right) <+\infty\,,\\
    \label{ip1'_conv}
    &(g_\ep)_{\ep\in(0,\ep_0)}\subset
    H^1(0,T; H) \quad\text{and}\quad
    \sup_{\ep\in(0,\ep_0)}
    \|g_\ep\|_{H^1(0,T; H)}^2<+\infty \quad\text{if } \tau=0\,,\\
    \label{ip2_conv}
    &\exists\,[a_0,b_0]\subset
    \operatorname{Int}D(\gamma):\quad
    a_0\leq(u_{0,\ep})_\Omega\leq b_0
    \quad\forall\,\ep\in(0,\ep_0)\,,\\
    \label{ip3_conv}
    &u_{0,\ep}\rightharpoonup u_0 \quad\text{in } H
    \quad\text{as } \ep\searrow0\,,\qquad
    g_\ep\rightharpoonup g \quad\text{in } L^2(0,T; H)
    \quad\text{as } \ep\searrow0\,.
\end{align}
Let 
$(u_\ep,\mu_\ep,\xi_\ep)_{\ep\in(0,\ep_0)}$
be a family of solutions to 
\eqref{u_nloc}--\eqref{init_nloc}
corresponding to 
the data $(u_{0,\ep}, g_\ep)$
and viscosity $\tau_\ep$,
where $u_\ep$ is uniquely 
determined.
Then, there exists a solution
$(u,\mu,\xi)$
to \eqref{u_loc}--\eqref{init_loc}
corresponding to the data $(u_0,g)$ and viscosity $\tau$,
where $u$ is uniquely determined,
such that, as $\ep\searrow0$,
\begin{align*}
    u_\ep \to u \qquad&\text{in } 
    C^0([0,T]; H)\,,\\ 
    \partial_t u_\ep \rightharpoonup \partial_t u \qquad&\text{in }
    L^2(0,T;V^*)\,,\\
    \partial_t u_\ep \rightharpoonup 
    \partial_t u \qquad&\text{in }
    L^2(0,T;H) \quad\text{if } \tau>0\,,\\
    \tau_\ep\partial_t u_\ep \to 0 \qquad&\text{in }
    L^2(0,T;H) \quad\text{if } \tau=0\,,\\
    \mu_\ep \rightharpoonup \mu \qquad&\text{in }
    L^2(0,T; V)\,,\\
    \mu_\ep \rightharpoonup \mu \qquad&\text{in }
    L^2(0,T; W) \quad\text{if } \tau>0\,,\\
    \xi_\ep \rightharpoonup \xi \qquad&\text{in }
    L^2(0,T; H)\,.
\end{align*}
\end{theorem}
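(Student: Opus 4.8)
The plan is to establish Theorem~\ref{th:conv} via the standard programme for nonlocal-to-local limits: derive $\ep$-uniform a priori estimates, extract weakly/strongly converging subsequences, pass to the limit in the weak formulation, and identify the limit of the nonlinear terms (in particular the nonlocal operator $B_\ep$ and the graph $\gamma$). I would begin by testing the nonlocal system with natural quantities to produce estimates that do not rely on any $H^1$-bound for $u_\ep$ (which is unavailable here). Testing \eqref{eq1_nloc} by $\mathcal N(u_\ep - (u_\ep)_\Omega)$ and combining with \eqref{eq2_nloc} tested by $\partial_t u_\ep$ should yield, through the energy identity associated with $\mathcal E_\ep$, a bound of the form
\begin{equation*}
\sup_{t}\Big(E_\ep(u_\ep(t)) + \|\hat\gamma(u_\ep(t))\|_{L^1(\Omega)}\Big) + \tau_\ep\|\partial_t u_\ep\|_{L^2(0,T;H)}^2 + \|\mu_\ep\|_{L^2(0,T;V)}^2 \le C,
\end{equation*}
uniformly in $\ep$, using assumptions \eqref{ip1_conv}--\eqref{ip3_conv}. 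The mean-value constraint \eqref{ip2_conv} and conservation of mass (from \eqref{eq1_nloc} with Neumann data) control $(u_\ep)_\Omega$ and, together with the potential estimate, give an $L^\infty(0,T;H)$ bound on $u_\ep$ and an $L^2(0,T;H)$ bound on $\xi_\ep$ via the standard argument using $\operatorname{Int}D(\gamma)$. From \eqref{eq1_nloc} one reads off $\partial_t u_\ep$ bounded in $L^2(0,T;V^*)$, and when $\tau>0$ one gets $\partial_t u_\ep$ bounded in $L^2(0,T;H)$; when $\tau=0$ the extra regularity \eqref{ip1'_conv} on $g_\ep$ is used to differentiate the equation in time and keep $\tau_\ep\partial_t u_\ep \to 0$ in $L^2(0,T;H)$.

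The crux is obtaining strong $L^2$-compactness of $u_\ep$, since the embeddings $V_\ep \hookrightarrow H$ are not compact uniformly in $\ep$ and a classical Aubin--Lions argument is unavailable. Here I would invoke Lemma~\ref{lemma:delta-est}: the $\ep$-uniform bound on $E_\ep(u_\ep(t))$ together with the $L^2(0,T;V^*)$-bound on $\partial_t u_\ep$ (hence equicontinuity in $V^*$) lets me estimate $\|u_{\ep_1}(t)-u_{\ep_2}(t)\|_H^2$ by $\delta(E_{\ep_1}+E_{\ep_2}) + C_\delta\|u_{\ep_1}(t)-u_{\ep_2}(t)\|_{V^*}^2$ and conclude that $(u_\ep)_\ep$ is Cauchy in $C^0([0,T];H)$ after passing to the limit first in the $V^*$-norm (via Ascoli on the $V^*$-equicontinuous, $V^*$-relatively-compact family) and then sending $\delta\searrow0$. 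This yields $u_\ep \to u$ strongly in $C^0([0,T];H)$, which is the strong convergence needed to pass to the limit in $\Pi(u_\ep)$ (Lipschitz) and to close the maximal-monotone argument for $\xi_\ep \in \gamma(u_\ep)$.

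With strong convergence in hand, I would pass to the limit in the variational formulation. The linear terms and the time derivatives pass by weak convergence. For the graph, the pair $(u_\ep,\xi_\ep) \to (u,\xi)$ strongly-weakly in $L^2(Q)$ with $\xi_\ep\in\gamma(u_\ep)$ gives $\xi\in\gamma(u)$ by maximal monotonicity (demiclosedness). The essential new point is identifying the limit of the nonlocal contribution: I would show that for every test function $\zeta$ (smooth, say in $C^1(\overline\Omega)$ or $V$), $\langle B_\ep(u_\ep),\zeta\rangle \to \int_\Omega \nabla u\cdot\nabla\zeta$. This combines the strong convergence $u_\ep\to u$ in $H$, the $\Gamma$-convergence/$\liminf$ inequality of Lemma~\ref{lemma:other-prop} (which forces $u\in V$ from $\sup_\ep E_\ep(u_\ep)<\infty$), and the pointwise limit $\langle B_\ep(\zeta),\cdot\rangle\to$ the Dirichlet form on smooth $\zeta$; a symmetry/duality argument transfers the derivative onto the test function, exploiting that $B_\ep$ is self-adjoint. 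Passing to the limit then recovers \eqref{eq2_loc} with $-\Delta u$ in place of $B_\ep(u_\ep)$, and the Neumann condition for $u$ is encoded automatically because the limiting form is the Dirichlet form on $V=H^1(\Omega)$. Uniqueness of $u$ (hence of the full limit via the continuous dependence in Theorem~\ref{th:wp_loc}) shows the whole family converges, not merely a subsequence. The main obstacle I anticipate is precisely the identification $B_\ep(u_\ep)\rightharpoonup -\Delta u$: since only $u_\ep\to u$ strongly in $H$ and $E_\ep(u_\ep)$ is bounded (no direct control on $B_\ep(u_\ep)$ in a fixed space), one must route the argument through $\Gamma$-convergence of $E_\ep$ and the self-adjointness of $B_\ep$ rather than through any naive strong convergence of $B_\ep(u_\ep)$.
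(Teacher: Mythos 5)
Your overall skeleton --- $\ep$-uniform estimates, strong $C^0([0,T];H)$ compactness via Lemma~\ref{lemma:delta-est}, demiclosedness of $\gamma$, and a $\Gamma$-convergence argument to recover $-\Delta u$ --- coincides with the paper's. But there is a genuine gap at the decisive step, the identification of the limit of $B_\ep(u_\ep)$, and it originates in a claim of yours that is both false and self-defeating: you assert that there is ``no direct control on $B_\ep(u_\ep)$ in a fixed space.'' Such control exists, and the paper's proof hinges on it. Once $\|\mu_\ep\|_{L^2(0,T;V)}$ is bounded (via the $\operatorname{Int}D(\gamma)$ argument for $(\mu_\ep)_\Omega$), comparison in \eqref{eq2_nloc} bounds $B_\ep(u_\ep)+\xi_\ep$ in $L^2(0,T;H)$, and testing this sum by $\xi_\ep$ --- using that $\int_\Omega B_\ep(u_\ep)\xi_\ep\,\xd x\geq 0$ because $\gamma$ is monotone and $K_\ep\geq0$ --- yields $\|B_\ep(u_\ep)\|_{L^2(0,T;H)}+\|\xi_\ep\|_{L^2(0,T;H)}\leq C$. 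This produces the weak limit $\eta\in L^2(0,T;H)$ in \eqref{conv:B_ep}, and the convexity inequality $\int_0^T E_\ep(u_\ep)+\int_Q B_\ep(u_\ep)(z-u_\ep)\leq \int_0^T E_\ep(z)$ (from $DE_\ep=B_\ep$) then passes to the limit by pairing the \emph{weak} $L^2(0,T;H)$ convergence of $B_\ep(u_\ep)$ with the \emph{strong} $L^2(0,T;H)$ convergence of $z-u_\ep$, the $\Gamma$-liminf for $E_\ep(u_\ep)$, and the pointwise limit $E_\ep(z)\to E(z)$; the resulting inequality \eqref{eq:in-fin1}, valid for all $z\in L^2(0,T;V)$, identifies $\eta=-\Delta u$ together with the Neumann condition and, by elliptic regularity, $u\in L^2(0,T;W)$. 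The weak-strong pairing is indispensable precisely for the quadratic term $\int_\Omega B_\ep(u_\ep)u_\ep\,\xd x=2E_\ep(u_\ep)$, which enters with a minus sign and therefore cannot be handled by the $\Gamma$-liminf inequality alone: splitting $\int_Q B_\ep(u_\ep)(z-u_\ep)$ and estimating $-E_\ep(u_\ep)$ from above by $-E(u)$ produces an inequality pointing in the wrong direction.

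Your substitute mechanism does not close. You propose symmetry, $\int_\Omega B_\ep(u_\ep)\zeta=\int_\Omega u_\ep B_\ep(\zeta)$, combined with $u_\ep\to u$ in $H$ and the pointwise convergence of the forms in Lemma~\ref{lemma:other-prop}. But for a fixed $\zeta\in V$, or even $\zeta\in C^\infty(\overline\Omega)$ without a boundary condition, $B_\ep(\zeta)$ is \emph{not} uniformly bounded in $H$: near $\partial\Omega$ the first-order term $\nabla\zeta(x)\cdot\int_\Omega\rho_\ep(|x-y|)\,\frac{x-y}{|x-y|^2}\,\xd y$ no longer cancels by odd symmetry (the domain is one-sided there), creating a layer of width $\sim\ep$ where $|B_\ep(\zeta)|\sim\ep^{-1}$, so that $\|B_\ep(\zeta)\|_H\to\infty$ whenever $\partial_{\bf n}\zeta\not\equiv0$. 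This is exactly the boundary blow-up that distinguishes the Neumann setting from the periodic one of \cite{DRST}. Consequently $\int_\Omega(u_\ep-u)B_\ep(\zeta)\,\xd x$ cannot be controlled (there is no rate on $\|u_\ep-u\|_H$), and working in the $V_\ep$-duality instead gives only $|\langle B_\ep(u_\ep-u),\zeta\rangle|\leq 2\sqrt{E_\ep(u_\ep-u)}\sqrt{E_\ep(\zeta)}=O(1)$, since the energies are bounded but do not vanish. Retreating to $\zeta\in C_c^\infty(\Omega)$ would salvage $B_\ep(\zeta)\to-\Delta\zeta$, but then you identify $\eta=-\Delta u$ only in $\mathcal D'(\Omega)$ and lose the Neumann condition $\partial_{\bf n}u=0$: contrary to your claim that it is ``encoded automatically,'' that condition comes precisely from testing with \emph{all} $z\in L^2(0,T;V)$ in \eqref{eq:in-fin1}, which is what the broken step was supposed to provide. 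Two smaller inaccuracies: the energy estimate is obtained by testing \eqref{eq1_nloc} by $\mu_\ep$ and \eqref{eq2_nloc} by $\partial_t u_\ep$ (your stated combination of $\mathcal N(u_\ep-(u_\ep)_\Omega)$ with $\partial_t u_\ep$ produces no cancellation of the cross terms in $\mu_\ep$); and in the case $\tau=0$ the paper does not differentiate the equation in time, but integrates $\int_Q g_\ep\,\partial_t u_\ep$ by parts in time using \eqref{ip1'_conv} and controls $\|u_\ep\|_H$ through the nonlocal Poincar\'e inequality of \cite[Theorem~1.1]{ponce04}.
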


\section{Proof of Theorem~\ref{th:wp_nloc}}
\label{proof1}

This section is devoted to the proof of well-posedness
of the nonlocal viscous Cahn-Hilliard equation.
Throughout the section, $\ep>0$ 
and $\tau_\ep>0$ are fixed.

\subsection{Approximation}
\label{subs:approx}
For every $\lambda>0$, 
let $\gamma_\lambda:\mathbb{R}\to
\mathbb{R}$ be the Yosida approximation of $\gamma$, having Lipschitz constant $1/\lambda$,
and set 
$\hat\gamma_\lambda(s):=\int_0^s\gamma_\lambda(r)\,\xd r$ for every $s\in \mathbb{R}$.
We consider the approximated problem
\begin{align}
    \label{1_app}
    \partial_t u_\ep^\lambda - \Delta \mu_\ep^\lambda
     = 0
    \qquad&\text{in } Q\,,\\
    \label{2_app}
    \mu_\ep^\lambda = \tau_\ep\partial_t u_\ep^\lambda-\lambda\Delta u_\ep^\lambda
    +B_\ep(u_\ep^\lambda)
    +\gamma_\lambda(u_\ep^\lambda)
    +\Pi(u_\ep^\lambda)-g_\ep
    \qquad&\text{in } Q\,,\\
    \label{4_app}
    \partial_{\bf n}u_\ep^\lambda=
    \partial_{\bf n}\mu_\ep^\lambda = 0
    \qquad&\text{in } \Sigma\,,\\
    \label{3_app}
    u_\ep^\lambda(0)=u_{0,\ep}^\lambda \qquad&\text{in } \Omega\,,
\end{align}
where the 
initial datum $u_{0,\ep}^\lambda$ satisfies
\begin{align}
    \label{ip_u0_ep_lam}
    &u_{0,\ep}^\lambda \in V\,, \qquad
    u_{0,\ep}^\lambda\to u_{0,\ep} \quad\text{in } H
    \quad\text{as } \ep\searrow0\,,\\
    \label{ip_u0_ep_lam2}
    &\sup_{\lambda\in(0,\lambda_0)}\left(
    \lambda\|u_{0,\ep}^\lambda\|_{V}^2+
    \|\hat\gamma(u_{0,\ep}^\lambda)\|_{L^1(\Omega)}
    \right)<+\infty
\end{align}
for a certain $\lambda_0>0$
(possibly depending on $\ep$).
The existence of an approximating sequence $(u_{0,\ep}^\lambda)_\lambda$
satisfying \eqref{ip_u0_ep_lam}--\eqref{ip_u0_ep_lam2} is 
guaranteed by assumption \eqref{ip1_loc}:
for example,
one can check that the classical 
elliptic regularization given by the unique solution to 
the problem
\[
  \begin{cases}
  u_{0,\ep}^\lambda - \lambda\Delta u_{0,\ep}^\lambda = u_{0,\ep} \quad&\text{in } \Omega\,,\\
  \partial_{\bf n}u_{0,\ep}^\lambda=0
  \quad&\text{in } \partial\Omega\,,
  \end{cases}
\]
is a possible choice.
The existence of a unique approximated solution $(u_\ep^\lambda,\mu_\ep^\lambda)$ for every $\lambda>0$ relies on a fixed-point argument, as in
\cite[Section 3.1]{DRST}.
For every $v\in L^2(0,T; W)$, 
since $W\embed C^{0,\frac14}(\overline\Omega)$
by the Sobolev embeddings, thanks to 
the properties of $B_\ep$ proved in Lemma~\ref{lemma:properties} we have that 
$B_\ep(v)\in L^2(0,T; H)$.
Hence, by the classical literature on the local
viscous Cahn-Hilliard equation
(see again \cite{col-gil-spr}), 
the map
\[
  \Gamma_{\ep}^{\lambda}: C^0([0,T]; H)\cap L^2(0,T; W)
  \to H^1(0,T; H)\cap L^\infty(0,T; V)\cap L^2(0,T; W)\,,
  \quad \Gamma_\ep^\lambda:v\mapsto v_\ep^\lambda\,,
\]
is well-defined,
where $(v_\ep^\lambda, w_\ep^\lambda)$
is the unique 
solution
to the local viscous Cahn-Hilliard equation
\begin{align*}
    \partial_t v_\ep^\lambda - \Delta w_\ep^\lambda
     = 0
    \qquad&\text{in } Q\,,\\
    w_\ep^\lambda = \tau_\ep\partial_t v_\ep^\lambda-\lambda\Delta v_\ep^\lambda
    +\gamma_\lambda(v_\ep^\lambda)
    +\Pi(v_\ep^\lambda)-\left(g_\ep - B_\ep(v)\right)
    \qquad&\text{in } Q\,,\\
    \partial_{\bf n}u_\ep^\lambda=
    \partial_{\bf n}\mu_\ep^\lambda = 0
    \qquad&\text{in } \Sigma\,,\\
    v_\ep^\lambda(0)=u_{0,\ep}^\lambda \qquad&\text{in } \Omega\,.
\end{align*}
Now, arguing as in \cite[Section 3.1]{DRST},
exploiting the Lipschitz-continuity of $\gamma_\lambda$,
the Sobolev embeddings, and 
the properties of $B_\ep$ contained in 
Lemma~\ref{lemma:properties},
we deduce that 
there exist constants $L_\ep^\lambda>0$ 
and $\sigma>0$
such that,
for every $v_1,v_2\in C^0([0,T]; H)\cap
L^2(0,T; W)$, we have
\[
  \|\Gamma_\ep^\lambda(v_1)
  -\Gamma_\ep^\lambda(v_2)\|_{C^0([0,T]; H)
  \cap L^2(0,T; W)}
  \leq L_\ep^\lambda T^\sigma
  \|v_1-v_2\|_{L^2(0,T; W)}\,.
\]
It follows that one can choose $T_0\in(0,T]$
sufficiently small so that 
$\Gamma_\ep^\lambda$ is a contraction
on the respective functional spaces defined in $(0,T_0)$.
Performing then a classical patching argument
(we refer again to \cite[Section 3.1]{DRST} for details), 
we infer that $\Gamma_\ep^\lambda$ 
has a unique fixed point on the whole interval 
$[0,T]$.
This proves that the approximated system 
\eqref{1_app}--\eqref{3_app} has
a unique solution
\[
u_\varepsilon^\lambda\in H^1(0,T;H)
\cap L^\infty(0,T; V)\cap 
L^2(0,T; W)\,,\qquad
\mu_\varepsilon^\lambda \in L^2(0,T;W)\,.
\]

\subsection{Uniform estimates}
\label{s:unif_est}
We prove here some uniform estimates 
independently of $\lambda$ and $\ep$. In what follows we will always assume that $\lambda\in [0,1]$.
Moreover, $\ep>0$ and $\tau_\ep>0$
are still fixed.\\

 We start by fixing $t\in [0,T]$, testing \eqref{1_app} with  $\mu_\varepsilon^{\lambda}$, \eqref{2_app} with $\partial_t u_\varepsilon^{\lambda}$, taking the difference, and 
integrating the resulting equation on $(0,t)$.
We obtain
\begin{align*}
&\int_{Q_t}|\nabla\mu_\varepsilon^\lambda(s,x)|^2\,\xd x \,\xd s
+\tau_\ep \int_{Q_t}
|\partial_t u_\ep^\lambda(s,x)|^2\,\xd x \,\xd s\\
&\qquad+\frac\lambda2\int_\Omega|\nabla u_\ep^\lambda(t,x)|^2\,\xd x
+ E_{\ep}(u^\lambda_\varepsilon(t,\cdot))
+ \int_\Omega (\hat\gamma_\lambda+\hat\Pi)(u^\lambda_\varepsilon(t,x))\,\xd x \\
&\leq \, \frac\lambda2\int_\Omega
|\nabla u_{0,\ep}^\lambda(x)|^2\,\xd x+E_{\ep}(u_{0,\ep}^\lambda)+
\int_\Omega 
(\hat\gamma_\lambda+\hat\Pi)(u_{0,\ep}^\lambda(x))\,\xd x
+\int_{Q_t} |g_\ep(s,x)||\partial_t u(s,x)|\xd x\,\xd s.
\end{align*}
From the fact that
\[
\int_\Omega \hat{\gamma}_{\lambda}(u_{0,\ep}^\lambda(x))\,\xd x\leq \int_\Omega \hat{\gamma}(u_{0,\ep}^\lambda(x))\,\xd x\quad\text{for every}\,\lambda>0,
\]
using 
the uniform bound \eqref{ip_u0_ep_lam2} 
as well as the Young inequality, 
we get
\begin{align}
&\notag\int_{Q_t}
|\nabla\mu_\varepsilon^\lambda(s,x)|^2\,\xd x\,\xd s
+\frac{\tau_\ep}{2}\int_{Q_t}
|\partial_t u_\ep^\lambda(s,x)|^2\,\xd x\,\xd s+ E_{\ep}(u_\varepsilon^\lambda(t,\cdot))
 +\frac\lambda2\int_\Omega|\nabla u_\ep^\lambda(t,x)|^2\,\xd x\\
&\label{eq:est1-high}\quad\leq  C_\ep
+\frac{\tau_\ep}4 \int_{Q_t} 
|\partial_t u^\lambda_\varepsilon(t,x)|^2\, \xd x\,\xd t 
+  \frac{1}{\tau_\ep}\int_0^T \int_{\Omega}|g_\ep(t,x)|^2\,\xd x\, \xd t 
\end{align}
for every $t\in [0,T]$, where $C_\ep>0$ is a constant
independent of $\lambda$ and depending only on the initial 
datum $u_{0,\ep}$.

From the arbitrariness of $t\in [0,T]$ we deduce that,
for every $\lambda\in(0,1)$,
\begin{align}
&\|\nabla\mu_\varepsilon^\lambda \|_{L^2(0,T;H)}
\leq C_\ep\,, \label{grad_mu} \\
&\|u_\varepsilon^\lambda\|_{L^\infty(0,T;V_\ep)\,}+
\|u_\varepsilon^\lambda\|_{H^1(0,T;H)}+\lambda^{1/2}\|\nabla u_\ep^\lambda\|_{L^\infty(0,T;H)}
\leq C_\ep\,,
\label{u-linfty} 
\end{align}
hence also, by comparison in \eqref{1_app},
\begin{equation}
  \label{mu_delta}
    \|\Delta\mu_\ep^\lambda\|_{L^2(0,T; H)}\leq C_\ep\,.
\end{equation}

Furthermore, noting that $(u_\ep^\lambda)_\Omega=
(u_{0,\ep}^\lambda)_\Omega=(u_{0,\ep})_\Omega$, 
we test \eqref{1_app} by $\mathcal N(u_\ep^\lambda - 
(u_{0,\ep})_\Omega)$, \eqref{2_app} by 
$u_\ep^\lambda - (u_{0,\ep})_\Omega$, and sum:
we obtain, for almost every $t\in(0,T)$,
\begin{align*}
    &\langle\partial_t u_\ep^\lambda(t),
    \mathcal N(u_\ep^\lambda(t) - (u_{0,\ep})_\Omega)\rangle_{V}
    +\tau_\ep\langle
    \partial_t u_\ep^\lambda(t), 
    u_\ep^\lambda(t) - (u_{0,\ep})_\Omega\rangle_V
    +\lambda\int_\Omega|\nabla u_\ep^\lambda(t,x)|^2\xd x\\
    &\qquad+\int_\Omega B_\ep(u_\ep^\lambda)(t,x)u_\ep^\lambda(t,x)\,\xd x
    +\int_\Omega\gamma_\lambda(u_\lambda(t,x))
    (u_\ep^\lambda(t,x) - (u_{0,\ep})_\Omega)\,\xd x\\
    &=\int_\Omega\left(g^\ep(t,x) - 
    \Pi(u_\ep^\lambda)(t,x)\right)
    (u_\ep^\lambda(t,x) - (u_{0,\ep})_\Omega)\,\xd x\,,
\end{align*}
where we have used that 
$\int_\Omega B_\ep(u_\ep^\lambda(t,x))\,\xd x
=0$ by the symmetry of the kernel $K_\ep$.
A classical argument shows that since
$(u_{0,\ep})_\Omega \in \operatorname{Int}D(\gamma)$,
then there are two constants $c_\ep, c_\ep'$,
only depending on the position of $(u_{0,\ep})_\Omega$,
such that
\[
  \|\gamma_\lambda(u_\ep^\lambda(t,\cdot))
  \|_{L^1(\Omega)}\leq
  c_\ep\int_\Omega\gamma_\lambda
  (u_\ep^\lambda(t,x))
  (u_\ep^\lambda(t,x)-(u_{0,\ep})_\Omega)
  \,\xd x +c_\ep'\,, \quad\text{for
  a.e.~}t\in(0,T)\,.
\]
Arguing as in \cite[Subsection 3.2]{DRST}, 
the estimates above and \eqref{grad_mu}--\eqref{u-linfty}
yield then
a control on 
$\|\gamma_\lambda(u_\lambda)\|_{L^2(0,T; L^1(\Omega))}$.
In particular, 
by comparison in \eqref{2_app} 
we get an 
estimate on $(\mu_\ep^\lambda)_\Omega$ in $L^2(0,T)$.
Taking \eqref{grad_mu} and \eqref{mu_delta}
into account, we deduce then that
\begin{align}
\Vert\mu_\varepsilon^\lambda\Vert_{L^2(0,T;W)} & \leq C_\ep\,. \label{est_mu}
\end{align}
By comparison in \eqref{2_app}
we infer that 
\[
  \|-\lambda\Delta u_\ep^\lambda
  +B_\ep(u_\ep^\lambda) + 
  \gamma_\lambda(u_\ep^\lambda)\|_{L^2(0,T; H)}\leq C_\ep\,.
\]
Testing $-\lambda\Delta u_\ep^\lambda
+B_\ep(u_\ep^\lambda) + 
\gamma_\lambda(u_\ep^\lambda)$
by $\gamma_\lambda(u_\ep^\lambda)$
and noting that, by monotonicity of $\gamma_\lambda$,
\begin{align*}
  &\int_\Omega (-\lambda\Delta u_\ep^\lambda(t,x)
  +B_\ep(u_\ep^\lambda)(t,x))
  \gamma_\lambda(u_\ep^\lambda)(t,x)\,\xd x\\
  &=\lambda\int_\Omega\gamma_\lambda'(u_\ep^\lambda)
  |\nabla u_\ep^\lambda(t,x)|^2\,\xd x\\
  &\qquad+\frac12\int_\Omega\int_\Omega
  K_\ep(x,y)\left(\gamma_\lambda(u_\ep^\lambda(t,x))
  -\gamma_\lambda(u_\ep^\lambda(t,y))\right)
  \left(u_\ep^\lambda(t,x)-u_\ep^\lambda(t,y)\right)
  \,\xd x\,\xd y
  \geq0\,,
\end{align*}
by the estimate above and the Young inequality we 
also deduce that
\begin{equation}
\label{est_F'}
\|-\lambda\Delta u_\ep^\lambda
  +B_\ep(u_\ep^\lambda)\|_{L^2(0,T; H)} + 
  \|\gamma_\lambda(u_\ep^\lambda)\|_{L^2(0,T; H)}\leq C_\ep\,.
\end{equation}

\subsection{Passage to the limit as $\lambda\searrow0$}
\label{subs:conv}
In this section we analyze the passage 
to the limit as $\lambda\searrow0$, 
with $\ep>0$  and $\tau_\ep>0$ still fixed.
In view of the uniform bounds 
\eqref{grad_mu}--\eqref{est_F'}
and the Aubin-Lions lemma,
up to the extraction of (not relabeled) subsequences
we have the following convergences:
\begin{align}
 \label{conv_strong_lam}
 u_{\ep}^\lambda&\to u_\ep
 \quad&\text{in } C^0([0,T];V^*)\,,\\
 u_\varepsilon^\lambda
 &\wstarto u_\ep 
 &\text{in } L^\infty(0,T;V_\ep)\cap H^1(0,T;H)\,, \label{conv_u_lam}\\
 \lambda u_\ep^\lambda &\to 0
 &\text{in } L^\infty(0,T; V)\,,
 \label{conv_u_lam_V}\\
 \mu_\varepsilon^\lambda &\rightharpoonup \mu_\ep 
 & \text{in } L^2(0,T;W)\,,\label{conv_mu_lam}\\
 \gamma_\lambda(u^\lambda_\varepsilon) 
 &\rightharpoonup \xi_{\ep} & \text{in } L^2(0,T;H)\,, \label{conv_gamma_lam}\\
 \Pi(u^\lambda_\varepsilon) 
 &\rightharpoonup \Xi_{\ep} & \text{in } L^2(0,T;H)\,, \label{conv_pi_lam}\\
 -\lambda\Delta u_\ep^\lambda + 
 B_\ep(u_\ep^\lambda) &\rightharpoonup \eta_\ep
 &\text{in } L^2(0,T; H)\,,
 \label{conv_last_lam}
\end{align}
for some
\begin{gather*}
u_\ep\in H^1(0,T;H)\cap L^\infty(0,T;V_\ep)\,,\qquad
\mu_\ep \in L^2(0,T;W)\,, \\
\xi_{\ep} \in L^2(0,T;H)\,, \qquad
\Xi_\ep\in L^2(0,T; H)\,, \qquad \eta_\ep\in L^2(0,T; H)\,.
\end{gather*}
From \eqref{conv_u_lam} and the fact that $B_\ep\in\mathscr{L}
(V_\ep, V_\ep^*)$, it is readily seen that 
\[
  B_\ep(u_\ep^\lambda)\wstarto B_\ep(u_\ep)
  \qquad\text{in } L^\infty(0,T; V_\ep^*)\,.
\]
Moreover, from \eqref{conv_u_lam_V} and 
\eqref{conv_last_lam}, it follows by comparison
that 
\[
  B_\ep(u_\ep^\lambda)\rightharpoonup \eta_\ep
  \qquad\text{in } L^2(0,T; V^*)\,.
\]
We deduce in particular that 
$B_\ep(u_\ep)=\eta_\ep\in L^2(0,T; H)$, so that also
$u_\ep\in L^2(0,T; W_\ep)$.
The strong convergence \eqref{conv_strong_lam}
implies also that $u_\ep(0)=u_{0,\ep}$. 

Passing to the limit in \eqref{1_app}-\eqref{3_app} in the weak topology of $L^2(0,T;H)$, we obtain
\begin{align}
    \label{1_app-ep}
    \partial_t u_\ep - \Delta \mu_\ep
     = 0 \quad&\text{in } L^2(0,T; H)\,,\\
    \label{2_app-ep}
    \mu_\ep = \tau_\ep\partial_t u_\ep
    +{B}_\ep(u_\ep)
    +\xi_{\ep}
    +\Xi_{\ep}-g_\ep
    \quad&\text{in } L^2(0,T; H)\,,\\
    \label{4_app-ep}
    \partial_{\bf n}\mu_\ep = 0 \quad&\text{in } L^2(\Sigma)\,,\\
    \label{3_app-ep}
    u_\ep(0)=u_{0,\ep} \quad&\text{in } H\,.
\end{align}
We proceed now providing an identification of
the nonlinear terms $\xi_\ep$ and $\Xi_\ep$:
we adapt an argument performed in
\cite[Subsection~3.6]{col-scar}.
To this end, since $\Pi$ is Lipschitz-continuous,
there exists $\alpha>0$ such that the operator
\[
\gamma+\Pi+\alpha
\tau_\ep\,\mathrm{Id}:\mathbb{R}\to 2^{\mathbb{R}}
\]
is maximal monotone. For example, one can choose
$\alpha:=\frac{2}{\tau_\ep}\|\Pi'\|_{L^\infty(\mathbb R)}$
(recall that $\tau_\ep>0$ is fixed).
    Multiplying \eqref{2_app} by $e^{-\alpha t}$, we obtain
    $$e^{-\alpha t}\mu_{\ep}^\lambda=\tau_\ep \partial_t (e^{-\alpha t}u_\ep^\lambda)-\lambda \Delta (e^{-\alpha t}u_\ep^\lambda)+B_\ep(e^{-\alpha t}u_\ep^\lambda)
    +e^{-\alpha t}
    (\gamma_\lambda(u_\ep^\lambda)
    +\Pi(u_\ep^\lambda)+\alpha\tau_\ep u_\ep^\lambda - g_\ep).$$
Thus, testing the previous 
equation by $e^{-\alpha t}u_\ep^\lambda$
and integrating in time yields
\begin{align*}
&\limsup_{\lambda\to 0}\int_Q 
e^{-2\alpha s}(\gamma_\lambda(u_\ep^\lambda(s,x))
+\Pi(u_\ep^\lambda(s,x))
+\alpha\tau_\ep u_\ep^\lambda(s,x))u_\ep^\lambda(s,x)
\, \xd x\,\xd s\\
&\leq \limsup_{\lambda\to 0}\left[\int_Q 
e^{-2\alpha s}\mu_\ep^\lambda(s,x) 
u_\ep^\lambda(s,x)\,\xd x\,\xd s
-\lambda\int_Q e^{-2\alpha s}
|\nabla u_\ep^\lambda(s,x)|^2\,\xd x\,\xd s\right.\\
&\qquad\qquad\left.-\frac{\tau_\ep}{2}\int_\Omega 
e^{-2\alpha T}|u_\ep^\lambda(T,x)|^2\,\xd x
+\frac{\tau_\ep}{2}\int_\Omega|u_{0,\ep}^\lambda(x)|^2\,\xd x
-2\int_0^T e^{-2\alpha s}E_\ep(u_\ep^\lambda(s,\cdot))\,\xd s\right.\\
&\qquad\qquad\left.+\int_Q e^{-2\alpha s} g_\ep(s,x)u_\ep^\lambda(s,x)\,\xd x\,\xd s\right].
\end{align*}

On the one hand, owing to \eqref{conv_strong_lam} and \eqref{conv_mu_lam},
$$
\lim_{\lambda\to 0}\int_Q e^{-2\alpha s}
(\mu_\ep^\lambda(s,x)+g_\ep(s,x))
u_\ep^\lambda(s,x)\,\xd x\,\xd s
=\int_Q 
e^{-2\alpha s}(\mu_\ep(s,x)+g_\ep(s,x))
u_\ep(s,x)\,\xd x\,\xd s\,.
$$
On the other hand, by the weak
lower semicontinuity of the norms,
the convergence \eqref{conv_u_lam},
and the assumption \eqref{ip_u0_ep_lam}, we have
\begin{align*}
&\limsup_{\lambda\to 0}
\left[-\lambda\int_Q 
e^{-2\alpha s}|\nabla u_\ep^\lambda(s,x)|^2\,\xd x\,\xd s\right.\\
&\qquad\qquad\left.-\frac{\tau_\ep}{2}\int_\Omega
e^{-2\alpha T}|u_\ep^\lambda(T,x)|^2\,\xd x
+\frac{\tau_\ep}{2}\int_\Omega|u_{0,\ep}^\lambda(x)|^2\,\xd x
-2\int_0^T e^{-2\alpha s}
E_\ep(u_\ep^\lambda(s,\cdot))\,\xd s\right]\\
&\leq-\frac{\tau_\ep}{2}\liminf_{\lambda\to 0}
\int_\Omega e^{-2\alpha T}|u_\ep^\lambda(t,x)|^2\,\xd x
+\frac{\tau_\ep}{2}\limsup_{\lambda\to0}
\int_\Omega|u_{0,\ep}^\lambda(x)|^2\,\xd x\\
&\qquad\qquad-2\liminf_{\lambda\to0}
\int_0^T e^{-2\alpha s}
E_\ep(u_\ep^\lambda(s,\cdot))\,\xd s\\
&\leq 
-\frac{\tau_\ep}{2}\int_\Omega 
e^{-2\alpha T}|u_\ep(T,x)|^2\,\xd x
+\frac{\tau_\ep}{2}\int_\Omega|u_{0,\ep}(x)|^2\,\xd x
-2\int_0^T e^{-2\alpha s}E_\ep(u_\ep(s,\cdot))\,\xd s\,.
\end{align*} 

Hence, we deduce that
\begin{align}
&\nonumber
\limsup_{\lambda\to 0}\int_Q 
e^{-2\alpha s}
(\gamma_\lambda(u_\ep^\lambda(s,x))+
\Pi(u_\ep^\lambda(s,x))+
\alpha\tau_\ep u_\ep^\lambda(s,x))u_\ep^\lambda(s,x)
\, \xd x\,\xd s\\
&\nonumber\leq \int_Q 
e^{-2\alpha s}(\mu_\ep(s,x)+g_\ep(s,x))u_\ep(s,x)\,\xd x\,\xd s\\
&\label{eq:limsup-arg}\qquad
-\frac{\tau_\ep}{2}\int_\Omega 
e^{-2\alpha T}|u_\ep(T,x)|^2\,\xd x
+\frac{\tau_\ep}{2}\int_\Omega|u_{0,\ep}(x)|^2\,\xd x
-2\int_0^T e^{-2\alpha s}E_\ep(u_\ep(s,\cdot))\,\xd s\,.
\end{align}
Testing \eqref{2_app-ep} by $e^{-2\alpha t}u_\ep$ 
and integrating in time, the right-hand side of \eqref{eq:limsup-arg} rewrites as
\begin{align*}
&\nonumber\limsup_{\lambda\to 0}\int_Q 
e^{-2\alpha s}
(\gamma_\lambda(u_\ep^\lambda(s,x))
+\Pi(u_\ep^\lambda(s,x))+\alpha\tau_\ep u_\ep^\lambda(s,x))u_\ep^\lambda(s,x) \xd x\,\xd s\\
&\nonumber\leq \int_0^t\int_\Omega 
e^{-2\alpha s}(\xi_{\ep}(s,x)+\Xi_{\ep}(s,x)+
\alpha\tau_\ep u_{\ep}(s,x))u_{\ep}(s,x)\,\xd x\,\xd s\,.
\end{align*}
Since the bilinear form
\[
  (v_1,v_2)\mapsto \int_Qe^{-2\alpha x}
  v_1(s,x)v_2(s,x)\,\xd x\,\xd s\,,
  \quad v_1,v_2\in L^2(Q)\,,
\]
is an equivalent scalar product on $L^2(Q)$,
by the maximal monotonicity of 
$\gamma+\Pi+\alpha\tau_\ep\,\mathrm{Id}$ we conclude that
\begin{equation}\label{incl_alpha}
\xi_{\ep}+\Xi_{\ep}+\alpha\tau_\ep u_{\ep}\in (\gamma+\Pi+\alpha\tau_\ep\,\mathrm{Id})(u_\ep)
\quad\text{a.e.~in } Q\,.
\end{equation}
This allows us to show the further strong convergences:
\begin{equation}
    \label{strong_H}
    u_\ep^\lambda(t) \to u_\ep (t)
    \quad\text{in } H \quad\forall\,t\in[0,T]\,,
    \qquad
    u_\ep^\lambda \to u_\ep
    \quad\text{in } L^2(0,T; V_\ep)\,.
\end{equation}
Indeed, taking the difference between
\eqref{2_app} and \eqref{2_app-ep}, multiplying 
again by $e^{-\alpha t}$, and testing by 
$e^{-\alpha t}(u_\ep^\lambda-u_\ep)$, we get
\begin{align*}
    &\frac{\tau_\ep}{2}\int_\Omega e^{-2\alpha t}
    |(u_\ep^\lambda-u_\ep)(t,x)|^2\,\xd x
    +\lambda\int_{Q_t}e^{-2\alpha s}|\nabla
    u_\ep^\lambda(s,x)|^2\,\xd x\,\xd s\\
    &\qquad+2\int_0^Te^{-2\alpha s}
    E_\ep((u_\ep^\lambda-u_\ep)(s,x))\,\xd s\\
    &\qquad+\int_{Q_t}e^{-2\alpha s}
    \left(\gamma_\lambda(u_\ep^\lambda)
    +\Pi(u_\ep^\lambda)
    +\alpha\tau_\ep u_{\ep}^\lambda
    -(\xi_\ep + \Xi_\ep + \alpha\tau_\ep u_\ep)
    \right)(s,x)
    (u_\ep^\lambda-u_\ep)(s,x)\,\xd x\,\xd s\\
    &=\frac{\tau_\ep}2\int_\Omega
    |u_{0,\ep}^\lambda(x)- u_{0,\ep}(x)|^2\,\xd x
    +\int_{Q_t}e^{-2\alpha s}
    (\mu_\lambda-\mu)(s,x)(u_\ep^\lambda-u_\ep)(s,x)
    \,\xd x\,\xd s\\
    &\qquad-\lambda\int_{Q_t}e^{-2\alpha s}
    \Delta u_\ep^\lambda(s,x)u_\ep(s,x)
    \,\xd x\,\xd s\,.
\end{align*}
We use now the notation $J^\gamma_\lambda:=
(\mathrm{Id}+\lambda\gamma)^{-1}:\mathbb{R}\to
\mathbb{R}$ for the resolvent of $\gamma$.
Summing and subtracting $J_\lambda^\gamma(u_\ep^\lambda)$
in the last term on the left-hand side, 
rearranging the terms, and recalling that 
$u_\ep^\lambda-J_\lambda^\gamma(u_\ep^\lambda)=
\lambda\gamma_\lambda(u_\ep^\lambda)$, 
we infer that, for every $t\in[0,T]$,
\begin{align*}
    &\frac{\tau_\ep}{2}\int_\Omega e^{-2\alpha t}
    |(u_\ep^\lambda-u_\ep)(t,x)|^2\,\xd x
    +2\int_0^Te^{-2\alpha s}
    E_\ep((u_\ep^\lambda-u_\ep)(s,x))\,\xd s\\
    &+\int_{Q_t}e^{-2\alpha s}
    \left(\gamma_\lambda(u_\ep^\lambda)
    {+}\Pi(J^\gamma_\lambda(u_\ep^\lambda))
    {+}\alpha\tau_\ep J^\gamma_\lambda(u_{\ep}^\lambda)
    {-}(\xi_\ep {+} \Xi_\ep {+} \alpha\tau_\ep u_\ep)
    \right)(s,x)
    (J_\lambda(u_\ep^\lambda){-}u_\ep)(s,x)\,\xd x\,\xd s\\
    &\leq \frac{\tau_\ep}2\int_\Omega
    |u_{0,\ep}^\lambda(x)- u_{0,\ep}(x)|^2\,\xd x +
    \int_{Q_t}e^{-2\alpha s}
    (\mu_\lambda-\mu)(s,x)(u_\ep^\lambda-u_\ep)(s,x)
    \,\xd x\,\xd s\\
    &-\int_{Q_t}e^{-2\alpha s}
    B_\ep(u_\ep^\lambda(s,x)))u_\ep(s,x)
    \,\xd x\,\xd s\\
    &+\int_{Q_t}e^{-2\alpha s}
    (-\lambda\Delta u_\ep^\lambda(s,x)
    +B_\ep(u_\ep^\lambda(s,x)))u_\ep(s,x)
    \,\xd x\,\xd s\\
    &+\int_{Q_t}e^{-2\alpha s}
    \left(\Pi(J^\gamma_\lambda(u_\ep^\lambda
    (s,x))) {-} \Pi(u_\ep^\lambda(s,x))
    {+}\alpha\tau_\ep(J^\gamma_\lambda(u_\ep^\lambda){-}u_\ep^\lambda)(s,x)\right)
    (J_\lambda(u_\ep^\lambda){-}u_\ep)(s,x)\,\xd x\,\xd s\\
    &-\lambda\int_{Q_t}e^{-2\alpha s}
    \left(\gamma_\lambda(u_\ep^\lambda)
    +\Pi(u_\ep^\lambda)
    +\alpha\tau_\ep u_{\ep}^\lambda
    -(\xi_\ep + \Xi_\ep + \alpha\tau_\ep u_\ep)
    \right)(s,x)
    \gamma_\lambda(u_\ep^\lambda(s,x))\,\xd x\,\xd s
    \,.
\end{align*}
Recalling that 
$\gamma_\lambda(r)\in\gamma(J_\lambda^\gamma(r))$
for every $r\in \mathbb{R}$,
by \eqref{incl_alpha} and the monotonicity 
of the operator 
$\gamma + \Pi + \alpha\tau_\ep\mathrm{Id}$,
the third term on the left-hand side is 
nonnegative. Let us show that the right-hand side
converges to $0$, analyzing each term
separately.
The first two terms on the right-hand side
converge to $0$ thanks to \eqref{ip_u0_ep_lam},
\eqref{conv_strong_lam} and \eqref{conv_mu_lam}.
Moreover, thanks to \eqref{conv_u_lam},
\eqref{conv_last_lam},
and the fact that $u_\ep\in L^2(0,T; W_\ep)$, we have
\[
    -\int_{Q_t}e^{-2\alpha s}
    B_\ep(u_\ep^\lambda(s,x))u_\ep(s,x)
    \,\xd x\,\xd s \to 
    -\int_{Q_t}e^{-2\alpha s}
    B_\ep(u_\ep(s,x))u_\ep(s,x)
    \,\xd x\,\xd s
\]
and
\[
    \int_{Q_t}e^{-2\alpha s}
    (-\lambda\Delta u_\ep^\lambda(s,x)
    +B_\ep(u_\ep^\lambda(s,x)))u_\ep(s,x)
    \,\xd x\,\xd s \to 
    \int_{Q_t}e^{-2\alpha s}
    B_\ep(u_\ep(s,x))u_\ep(s,x)
    \,\xd x\,\xd s\,.
\]
Finally, since $(\gamma_\lambda(u_\ep^\lambda))_\lambda$
is bounded in $L^2(0,T; H)$ by \eqref{est_F'},
using the Lipschitz-continuity of $Pi$,
the last two terms on the right-hand side can be 
handled by
\begin{align*}
    &\lambda
    \|\gamma_\lambda(u_\ep^\lambda)\|_{L^2(0,T; H)}
    \left(\|J_\lambda^\gamma
    (u_\ep^\lambda)\|_{L^2(0,T; H)}
    +\|u_\ep\|_{L^2(0,T; H)}\|\right.\\
    &\qquad\left.+\|\gamma_\lambda(u_\ep^\lambda)
    +\Pi(u_\ep^\lambda)
    +\alpha\tau_\ep u_{\ep}^\lambda
    -(\xi_\ep + \Xi_\ep 
    + \alpha\tau_\ep u_\ep)\|_{L^2(0,T; H)}
    \right)
    \leq C_\ep \lambda \to 0\,.
\end{align*}
Since $t\in[0,T]$ is arbitrary, the strong convergences
\eqref{strong_H} follows.
In particular, this readily implies that 
$\Xi_\ep=\Pi(u_\ep)$
and $\xi_\ep \in \gamma (u_\ep)$
almost everywhere in $Q$
by the Lipschitz-continuity 
of $\Pi$ and
by the maximal monotonicity of $\gamma$, respectively.

It is then clear that 
$(u_\ep,\mu_\ep,\xi_\ep)$ is a solution to the nonlocal viscous Cahn-Hilliard equation in the sense of
\eqref{u_nloc}--\eqref{init_nloc}.
This completes the proof of 
the first assertion of Theorem \ref{th:wp_nloc}.

\subsection{Continuous dependence}
Let $(u_{0,\ep}^1, g_\ep^1)$ and 
$(u_{0,\ep}^2, g_\ep^2)$ satisfy the assumptions
\eqref{ip1_nloc}--\eqref{ip2_nloc}
with $(u_{0,\ep}^1)_\Omega=
(u_{0,\ep}^2)_\Omega$, and let 
$(u_\ep^1, \mu_\ep^1,\xi_\ep^1)$ and $(u_\ep^2,
\mu_\ep^2, \xi_\ep^2)$ be any corresponding 
solutions to \eqref{u_nloc}--\eqref{init_nloc}.

We observe that their difference solves
\begin{align*}
    \partial_t(u_\ep^1-u_\ep^2)-\Delta(\mu_\ep^1-\mu_\ep^2)=0
    \quad&\text{in } Q\,,\\
    \mu_\ep^1-\mu_\ep^2=\tau_\ep\partial_t (u_\ep^1-u_\ep^2)+B_\ep(u_\ep^1-u_\ep^2)
    +\xi_\ep^1-\xi_\ep^2 + \Pi(u_\ep^1)-\Pi(u_\ep^2)
    -(g_\ep^1-g_\ep^2)
    \quad&\text{in Q}\,,\\
    \partial_{\bf n}(\mu_\ep^1-\mu_\ep^2)=0
    \quad&\text{in }\Sigma\,,\\
    (u_\ep^1-u_\ep^2)(0)=0 \quad&\text{in } \Omega\,.
\end{align*}
By the assumption on the initial data, we have that $(u_\ep^1-u_\ep^2)_\Omega=0$. Therefore, we can
test the first equation by $\mathcal N(u_\ep^1-u_\ep^2)$,
the second by $u_\ep^1-u_\ep^2$, and take the difference:
by performing classical computations we get
\begin{align*}
    &\frac12\|(u_\ep^1-u_\ep^2)(t)\|_{V^*}^2+\frac{\tau_\ep}{2}\|(u_\ep^1-u_\ep^2)(t)\|_{H}^2
    +2\int_0^t E_\ep(u_\ep^1-u_\ep^2)(s)\,\xd s\\
    &\qquad +\int_{Q_t}
    (\xi_\ep^1-\xi_\ep^2)(s,x)(u_\ep^1-u_\ep^2)(s,x)
    \,\xd x\,\xd s\\
    &=\frac12\|(u_{0,\ep}^1
    -u_{0,\ep}^2)\|_{V^*}^2+\frac{\tau_\ep}{2}\|(u_{0,\ep}^1
    -u_{0,\ep}^2)\|_{H}^2\\
    &\qquad+\int_{Q_t}\left(g_\ep^1-g_\ep^2
    -\Pi(u_\ep^1)+\Pi(u_\ep^2)\right)(s,x)
    (u_\ep^1-u_\ep^2)(s,x)\,.
\end{align*}
The last term on the left-hand side is nonnegative
by the monotonicity of $\gamma$. Hence,
the continuous-dependence
property stated in Theorem~\ref{th:wp_nloc} follows from
the Lipschitz-continuity of $\Pi$ and the Gronwall lemma.

\section{Proof of Theorem~\ref{th:conv}}
\label{proof2}
This section is devoted to study the asymptotic behavior of solutions to the nonlocal viscous Cahn-Hilliard equation
as $\ep\searrow0$.

Let us recall that the family of data $(u_{0,\ep},g_\ep)_{\ep>0}$
are assumed to satisfy \eqref{ip1_conv}--\eqref{ip3_conv},
while $(u_\ep, \mu_\ep, \xi_\ep)$ is a corresponding 
solution to 
\eqref{u_nloc}--\eqref{init_nloc}.

\subsection{The case $\tau>0$}
\label{ss:tau_pos}
We consider here the case $\tau>0$, so that $\tau_\ep\to \tau>0$.
As a major consequence, this implies that 
it is not restrictive to assume that 
\begin{equation}\label{tau_bound}
    \exists\,\tau_*>0: \quad\tau_\ep\geq\tau_*  
    \quad\forall\,\ep\in(0,\ep_0)\,.
\end{equation}

We test \eqref{eq1_nloc} by $\mu_\ep$, 
\eqref{eq2_nloc} by $\partial_t u_\ep$, take the difference,
and integrate on $Q_t$: 
recalling \eqref{ip1'_conv} and using the Young inequality,
we deduce that 
\begin{align*}
&\int_{Q_t}|\nabla\mu_\varepsilon(s,x)|^2\,\xd x \,\xd s
+\tau_\ep \int_{Q_t}
|\partial_t u_\ep(s,x)|^2\,\xd x \,\xd s
+ E_{\ep}(u_\varepsilon(t,\cdot)) + 
\int_\Omega (\hat\gamma+\hat\Pi)
(u_\varepsilon(t,x))\,\xd x \\
&\leq E_{\ep}(u_{0,\ep})+
\int_\Omega 
(\hat\gamma+\hat\Pi)(u_{0,\ep}(x))\,\xd x
+\frac{\tau_\ep}{2}\int_{Q_t}|\partial_t u_\ep(s,x)|^2\,\xd x\,\xd s
+\frac1{\tau_\ep}\int_{Q_t}|g_\ep(s,x)|^2\,\xd x\,\xd s\,.
\end{align*}
Note that $\frac1{\tau_\ep}\leq\frac1{\tau_*}$
by \eqref{tau_bound}. Hence, 
rearranging the terms and 
using \eqref{ip1_conv} we infer that 
there exists a constant $C>0$, independent of $\ep$,
such that 
\[
  \|\nabla \mu_\ep\|_{L^2(0,T; H)} + 
  \|u_\ep\|_{H^1(0,T; H)\cap L^\infty(0,T; V_\ep)}\leq C\,
\]
hence also, by comparison in \eqref{eq1_nloc},
\[
    \|\Delta\mu_\ep\|_{L^2(0,T; H)}\leq C\,.
\]
Now, we can proceed as in the previous Section~\ref{s:unif_est}.
Since $(u_\ep)_\Omega=(u_{0,\ep})_\Omega$, 
we can test
\eqref{eq1_nloc} by $\mathcal N(u_\ep - (u_{0,\ep})_\Omega)$, \eqref{eq2_nloc} by $u_\ep - (u_{0,\ep})_\Omega$, and sum:
we obtain, for almost every $t\in(0,T)$,
\begin{align*}
    &\langle\partial_t u_\ep(t),
    \mathcal N(u_\ep(t) - (u_{0,\ep})_\Omega)\rangle_{V}
    +\tau_\ep\langle
    \partial_t u_\ep(t), 
    u_\ep(t) - (u_{0,\ep})_\Omega\rangle_V
    +2E_\ep(u_\ep(t,x))\\
    &\qquad+\int_\Omega\xi_\ep(t,x)
    (u_\ep(t,x) - (u_{0,\ep})_\Omega)\,\xd x\\
    &=\int_\Omega\left(g^\ep(t,x) - 
    \Pi(u_\ep)(t,x)\right)
    (u_\ep(t,x) - (u_{0,\ep})_\Omega)\,\xd x.
\end{align*}
Again, by the estimates already performed, all the terms
are bounded in $L^2(0,T)$ except
\[
  \int_\Omega\xi_\ep(t,x)
    (u_\ep(t,x) - (u_{0,\ep})_\Omega)\,\xd x\,.
\]
Thanks to assumption \eqref{ip2_conv},
there are two constants $c, c'>0$,
independent of $\ep$,
such that
\[
  \|\xi_\ep(t,\cdot)\|_{L^1(\Omega)}\leq
  c\int_\Omega\xi_\ep(t,\cdot)
  (u_\ep(t,x)-(u_{0,\ep})_\Omega)\,\xd x +c'\,.
\]
Hence, we deduce that 
\[
\|\xi_\ep\|_{L^2(0,T; L^1(\Omega))}\leq C\,,
\]
which implies, by comparison in \eqref{eq2_nloc},
that 
\[
\|(\mu_\ep)_\Omega\|_{L^2(0,T)}\leq C\,.
\]
We deduce that 
\[
  \|\mu_\ep\|_{L^2(0,T; W)}\leq C\,.
\]
Thus, by comparison in \eqref{eq2_nloc}
and by monotonicity of $\gamma$, we obtain that
\[
  \|B_\ep(u_\ep)\|_{L^2(0,T; H)} 
  + \|\xi_\ep\|_{L^2(0,T; H)}\leq C\,.
\]

By the Aubin-Lions compactness theorem we infer that, up to the extraction of (not relabeled) subsequences, 
as $\ep\searrow0$,
\begin{align}
 \label{strong_star}
 u_{\ep}\to u \qquad&\text{in }  C^0([0,T]; V^*)\,,\\
 u_\varepsilon
 \rightharpoonup u \qquad&\text{in } H^1(0,T;H)\,, \\
 B_\varepsilon(u_\varepsilon)\rightharpoonup
 \eta  \qquad&\text{in } L^2(0,T;H)\,,\label{conv:B_ep}\\
 \mu_\varepsilon \rightharpoonup \mu 
 \qquad&\text{in } L^2(0,T;W)\,,\\
 \xi_\ep \rightharpoonup \xi 
 \qquad&\text{in } L^2(0,T;H)
\end{align}
for some
\[
    u \in H^1(0,T; H)\,, \qquad
    \mu \in L^2(0,T; W)\,, \qquad
    \xi,\eta \in L^2(0,T; H)\,.
\]
We proceed by showing the strong convergence
\begin{equation}\label{strong_u_ep}
    u_\varepsilon \to u 
    \qquad\text{in } C^0([0,T]; H)\,.
\end{equation}
To this end, we show that the sequence
$(u_\ep)_\ep$ is Cauchy in $C^0([0,T]; H)$.
For any arbitrary $\sigma>0$, 
we apply Lemma~\ref{lemma:delta-est} with 
the choice $\delta:=\frac{\sigma}{4C}$, 
where $C>0$ is the constant obtained in
the estimates above. We deduce that there
exists $\bar\ep=\bar\ep_\sigma$ 
and $C_\sigma>0$ such that 
\[
\|(u_{\ep_1}-u_{\ep_2})(t)\|_H^2\leq
\frac{\sigma}{4C}\left(E_{\ep_1}(u_{\ep_1}(t))
+E_{\ep_2}(u_{\ep_2}(t))\right)
+C_\sigma\|(u_{\ep_1}-u_{\ep_2})(t)\|_{V^*}^2
\]
for every $\ep_1,\ep_2\in(0,\bar\ep_\sigma)$,
for every $t\in[0,T]$.
Thanks to \eqref{strong_star}, there 
exists $\tilde\ep_\sigma\in(0,\bar\ep_\sigma)$
such that 
\[
\|u_{\ep_1}-u_{\ep_2}\|^2_{C^0([0,T];V^*)}\leq
\frac{\sigma}{2C_\sigma} \quad\forall\,\ep_1,\ep_2
\in(0,\tilde\ep_\sigma)\,.
\]
Hence, taking the supremum in time
and using the estimates above we infer that 
\begin{align*}
  &\|u_{\ep_1}-u_{\ep_2}\|_{C^0([0,T];H)}^2\\
  &\leq
  \frac{\sigma}{4C}\left(
  \|E_{\ep_1}(u_{\ep_1})\|_{L^\infty(0,T)}+ \|E_{\ep_2}(u_{\ep_2})\|_{L^\infty(0,T)}\right)
  +C_\sigma\|u_{\ep_1}-u_{\ep_2}\|^2_{C^0([0,T];V^*)}\\
  &\leq\frac\sigma{4C}(C+C) + 
  C_\sigma\frac\sigma{2C_\sigma} = \sigma
\end{align*}
for every $\ep_1,\ep_2\in(0,\tilde\ep_\sigma)$.
Since $\sigma>0$ is arbitrary, we obtain the strong
convergence \eqref{strong_u_ep}.

Now, from \eqref{strong_u_ep} 
and the Lipschitz continuity of $\Pi$, it follows that
\[
  \Pi(u_\ep) \to \Pi(u) 
  \qquad\text{in } C^0([0,T]; H)\,,
\]
while the strong-weak closure of $\gamma$
readily ensures that $\xi_\ep\in\gamma(u_\ep)$
almost everywhere in $Q$.

To conclude the proof of the theorem, it remains to
prove additional spatial regularity for $u$ and to
provide an identification of $\eta$. 
First of all, note that since $(u_\ep)_\ep$
is bounded in $L^\infty(0,T; V_\ep)$, 
by the Ponce criterion \cite[Theorem~1.2]{ponce04}
we have that $u\in L^\infty(0,T; V)$.

Let us identify now the term $\eta$.
We first observe that by Lemma \ref{lemma:properties} there holds $DE_\ep=B_\ep$ as operators on $V_\ep$. Thus, by Lemma \ref{lemma:other-prop}, and by the continuous inclusion of $V$ into $V_\ep$, we deduce
\[
E_\varepsilon(z_1)+ \langle B_\varepsilon(z_1),z_2-z_1\rangle_{V_\ep^*,V_\ep}\leq E_\varepsilon (z_2) \quad\forall\,
z_1,z_2\in V\,.
\]
Hence, for all $z\in L^2(0,T; V)$ we deduce that
\begin{equation}
    \label{eq:add20}
    \int_0^TE_\varepsilon(u_\varepsilon(t,\cdot))\, \xd t
    +\int_0^T\int_\Omega 
    \bB_\varepsilon(u_\varepsilon(t,x))
    (z(t,x)-u_\varepsilon(t,x))\, \xd x \, \xd t\leq \int_0^T E_\varepsilon(z(t,\cdot))\, \xd t.
\end{equation}
Owing to Lemma \ref{lemma:other-prop},
and to the dominated convergence theorem, we have
\[
\int_0^T E_\varepsilon(z(t,\cdot))\, \xd t\to
\frac12\int_0^T\int_\Omega|\nabla z(x,t)|^2\xd x\,\xd t\,.
\]
On the one hand, \eqref{conv:B_ep} and \eqref{strong_u_ep} yield
\[
\int_0^T\int_\Omega 
B_\varepsilon(u_\varepsilon(t,x))
(z(t,x)-u_\varepsilon(t,x))\, \xd x\, \xd t \to 
\int_0^T\int_\Omega\eta(t,x)(z(t,x)-u(t,x))\, \xd x\, \xd t.
\] 
On the other hand, by 
the Gamma-convergence result in
Lemma \ref{lemma:other-prop} and by Fatou's Lemma, 
\[
\liminf_{\ep\to 0}\int_0^T E_\varepsilon(u_\varepsilon(t,\cdot))\,\xd t\geq  
\frac12\int_Q|\nabla u(t,x)|^2\,\xd x\,\xd t.
\]
Letting $\varepsilon\to0$
in \eqref{eq:add20} and 
recalling that $u\in L^\infty(0,T; V)$, 
we obtain the inequality 
\begin{equation}
\label{eq:in-fin1}
\frac12\int_Q |\nabla u(t,x)|^2\,\xd x\, \xd t
+\int_Q \eta(t,x) (z(t,x)-u(t,x))\, \xd x\, \xd t
\leq \frac12\int_Q|\nabla z(t,x)|^2\,\xd x\, \xd t
\end{equation}
for every $z\in L^2(0,T; V)$, which in turn implies that
$-\Delta u=\eta \in L^2(0,T;H)$.
Since $u\in L^\infty(0,T; V)$ and 
$\Delta u \in L^2(0,T; H)$ in the sense
of distributions for example, 
by \cite[Thm.~2.27]{brezzi-gilardi} 
the normal derivative 
$\partial_{\bf n} u 
\in L^2(0,T; H^{-1/2}(\partial \Omega))$ is
well defined.
We infer that, for almost every $t\in(0,T)$
and for every $\varphi\in V$,
\[
  \int_\Omega
  \nabla u(t,x)\cdot\nabla\varphi(x)\,\xd x
  =\int_\Omega\eta(t,x)\varphi(x)\,\xd x\,,
\]
from which it follows that 
\[
  -\int_\Omega\Delta u(t,x)
  \varphi(x)\,\xd x
  +\langle\partial_{\bf n}u(t,\cdot), \varphi_{|\partial\Omega}
  \rangle_{H^{-1/2}(\partial\Omega),
  H^{1/2}(\partial\Omega)}
  =\int_\Omega\eta(t,x)\varphi(x)\,.
\]
As $-\Delta u = \eta$ in $L^2(0,T; H)$, 
we infer that 
\[
  \langle\partial_{\bf n}u(t,\cdot), \varphi_0
  \rangle_{H^{-1/2}(\partial\Omega),
  H^{1/2}(\partial\Omega)}=0
  \quad\forall\varphi_0\in H^{1/2}(\Omega)\,,
\]
hence $\partial_{\bf n}u=0$ 
almost everywhere in $\Sigma$.
Now, since we have that 
$\Delta u\in L^2(0,T; H)$
and $\partial_{\bf n}u =0\in 
L^2(0,T; H^{1/2}(\partial\Omega))$,
by the elliptic 
regularity result \cite[Thm.~3.2]{brezzi-gilardi}
we infer that $u\in L^2(0,T; W)$.
Eventually, letting $\ep\searrow0$
in the equations \eqref{eq1_nloc}--\eqref{eq2_nloc}
we obtain 
\[
  \partial_t u - \Delta\mu = 0 \quad\text{in } L^2(0,T; H)
\]
and
\[
  \mu=\tau\partial_t u
  -\Delta u + \xi + \Pi(u)-g \quad\text{in } L^2(0,T; H)\,.
\]
This implies that $u$ is a solution to the local Cahn-Hilliard equation according to 
conditions \eqref{u_loc}--\eqref{init_loc},
in the viscous case $\tau>0$.
This concludes the proof of Theorem \ref{th:conv}
in the case $\tau>0$.

\subsection{The case $\tau=0$}
We consider here the case $\tau=0$, 
so that $\tau_\ep\to0$.

We perform the first estimate as in the previous 
section: we test \eqref{eq1_nloc} by $\mu_\ep$, 
\eqref{eq2_nloc} by $\partial_t u_\ep$, take the difference, and integrate on $Q_t$: 
we obtain
\begin{align*}
&\int_{Q_t}|\nabla\mu_\varepsilon(s,x)|^2\,\xd x \,\xd s
+\tau_\ep \int_{Q_t}
|\partial_t u_\ep(s,x)|^2\,\xd x \,\xd s
+ E_{\ep}(u_\varepsilon(t,\cdot)) + 
\int_\Omega (\hat\gamma+\hat\Pi)
(u_\varepsilon(t,x))\,\xd x \\
&= E_{\ep}(u_{0,\ep})+
\int_\Omega 
(\hat\gamma+\hat\Pi)(u_{0,\ep}(x))\,\xd x
+\int_{Q_t}g_\ep(s,x)\partial_t u_\ep(s,x)\,\xd x\,\xd s\,.
\end{align*}
Using now the additional assumption \eqref{ip1'_conv}
in the case $\tau=0$, we can integrate by parts
with respect to time 
in the last term on the right-hand side
and use the Young inequality as
\begin{align*}
    &\int_{Q_t}g_\ep(s,x)
    \partial_t u_\ep(s,x)\,\xd x\,\xd s\\
    &=-\int_{Q_t}\partial_tg_\ep(s,x)
    u_\ep(s,x)\,\xd x\,\xd s
    +\int_{\Omega}g_\ep(t,x) u_\ep(t,x)\,\xd x
    -\int_{\Omega}g_\ep(0,x) u_{0,\ep}(x)\,\xd x\\
    &\leq\frac12\|g_\ep\|_{H^1(0,T; H)}^2
    +\frac12\int_{Q_t}|u_\ep(s,x)|^2\,\xd x\,\xd s
    +\sigma\int_\Omega|u_\ep(t,x)|^2\,\xd x
    +\frac1{4\sigma}\|g_\ep(t,\cdot)\|_H^2\\
    &\qquad+\frac12\|u_{0,\ep}\|_H^2 + 
    \frac12\|g_\ep(0,\cdot)\|_H^2
\end{align*}
for every $\sigma>0$.
Moreover, note that by the generalized 
Poincar\'e inequality contained in
\cite[Theorem~1.1]{ponce04}, 
there exist constants $C>0$ and 
$\bar\ep\in(0,\ep_0)$, independent of $\ep$
and of $t$, such that
\[
  \int_\Omega|u_\ep(t,x)-(u_\ep(t,\cdot))_\Omega|^2\,\xd x
  \leq C E_\ep(u_\ep(t,\cdot))
  \quad\forall\,\ep\in(0,\bar\ep)\,.
\]
Since $(u_\ep)_\Omega=(u_{0,\ep})_\Omega$, rearranging the
terms and choosing $\sigma>0$ sufficiently small
(independently of $\ep$), we infer that 
\begin{align*}
&\int_{Q_t}|\nabla\mu_\varepsilon(s,x)|^2\,\xd x \,\xd s
+\tau_\ep \int_{Q_t}
|\partial_t u_\ep(s,x)|^2\,\xd x \,\xd s
+E_\ep(u_\ep(t,\cdot))
+ \|u_\varepsilon(t,\cdot)\|_{H}^2 \\
&\leq C\left(E_{\ep}(u_{0,\ep})+
\|u_{0,\ep}\|_H^2+
\int_\Omega 
(\hat\gamma+\hat\Pi)(u_{0,\ep}(x))\,\xd x
+\|g_\ep\|^2_{H^1(0,T; H)}
\right)+\int_{Q_t}|u_\ep(s,x)|^2\,\xd x\,\xd s
\end{align*}
for a certain $C>0$ independent of $\ep$.
Recalling then the assumptions 
\eqref{ip1_conv}--\eqref{ip1'_conv}, 
the Gronwall lemma yields
\[
  \|\nabla \mu_\ep\|_{L^2(0,T; H)} + 
  \|u_\ep\|_{L^\infty(0,T; V_\ep)}
  +\tau_\ep^{1/2}\|\partial_t u_\ep\|_{L^2(0,T; H)}
  \leq C\,,
\]
hence also, by comparison in \eqref{eq1_nloc},
\[
    \|\partial_t u_\ep\|_{L^2(0,T; V^*)}\leq C\,.
\]

At this point, we proceed exactly as in
the previous Section~\ref{ss:tau_pos}, and 
infer that 
\[
\|\xi_\ep\|_{L^2(0,T; L^1(\Omega))}\leq C\,,
\]
which implies, by comparison in \eqref{eq2_nloc},
that 
\[
\|(\mu_\ep)_\Omega\|_{L^2(0,T)}\leq C\,.
\]
We deduce then 
\[
  \|\mu_\ep\|_{L^2(0,T; V)}\leq C\,,
\]
and again, by comparison in \eqref{eq2_nloc}
and by monotonicity of $\gamma$, that
\[
  \|B_\ep(u_\ep)\|_{L^2(0,T; H)} 
  + \|\xi_\ep\|_{L^2(0,T; H)}\leq C\,.
\]

The Aubin-Lions theorems ensure then that, 
up to not relabeled subsequence, as $\ep\searrow0$,
\begin{align}
 \label{strong_star_0}
 u_{\ep}\to u \qquad&\text{in }  C^0([0,T]; V^*)\,,\\
 u_\varepsilon
 \wstarto u \qquad&\text{in } H^1(0,T;V^*)\cap
 L^\infty(0,T; H)\,, \\
 \tau_\ep u_\ep \to 0 
 \qquad&\text{in } H^1(0,T;H)\,,\\
 B_\varepsilon(u_\varepsilon)\rightharpoonup
 \eta  \qquad&\text{in } L^2(0,T;H)\,,\label{conv:B_ep_0}\\
 \mu_\varepsilon \rightharpoonup \mu 
 \qquad&\text{in } L^2(0,T;V)\,,\\
 \xi_\ep \rightharpoonup \xi 
 \qquad&\text{in } L^2(0,T;H)
\end{align}
for some
\[
    u \in H^1(0,T; V^*)\cap L^\infty(0,T; H)\,, \qquad
    \mu \in L^2(0,T; V)\,, \qquad
    \xi,\eta \in L^2(0,T; H)\,.
\]
Arguing as in the previous Section~\ref{ss:tau_pos}
thanks to the Lemma~\ref{lemma:delta-est}, 
the convergence \eqref{strong_star_0}
and the boundedness of $(E_\ep(u_\ep))_\ep$
in $L^\infty(0,T)$ imply the
strong convergence
\[
    u_\varepsilon \to u 
    \qquad\text{in } C^0([0,T]; H)\,.
\]
Hence, by
the Lipschitz continuity of $\Pi$ we have
\[
  \Pi(u_\ep) \to \Pi(u) 
  \qquad\text{in } C^0([0,T]; H)\,,
\]
while the strong-weak closure of $\gamma$
yields $\xi_\ep\in\gamma(u_\ep)$
almost everywhere in $Q$.
Moreover, still arguing as in the previous 
section we obtain that $u\in L^\infty(0,T; V)$,
$\eta=-\Delta u$, and 
$u\in L^2(0,T; W)$ by elliptic regularity.

Passing to the weak limit in 
\eqref{eq1_nloc}--\eqref{eq2_nloc}
we obtain then
\[
  \partial_t u - \Delta\mu = 0 
  \quad\text{in } L^2(0,T; V^*)
\]
and
\[
  \mu=
  -\Delta u + \xi + \Pi(u)-g 
  \quad\text{in } L^2(0,T; H)\,.
\]
This concludes the proof of Theorem \ref{th:conv}
also in the case $\tau=0$.

\section*{Acknowledgements}

E.D and L.T. have been supported by the Austrian Science Fund (FWF) project F 65. E.D. has been funded by the Austrian Science Fund (FWF) project V 662 N32. The research of E.D. has been additionally supported from the Austrian Science Fund (FWF) through the grant I 4052 N32, and from BMBWF through the OeAD-WTZ project CZ04/2019.

L.S.~has been funded 
by Vienna Science and Technology Fund (WWTF) through Project MA14-009.

\bibliographystyle{abbrv}
\bibliography{ref}

\def\cprime{$'$}
\begin{thebibliography}{10}

\bibitem{abels-ARMA}
H.~Abels.
\newblock On a diffuse interface model for two-phase flows of viscous,
  incompressible fluids with matched densities.
\newblock {\em Arch. Ration. Mech. Anal.}, 194(2):463--506, 2009.

\bibitem{ab-bos-grass-NLCH}
H.~Abels, S.~Bosia, and M.~Grasselli.
\newblock Cahn-{H}illiard equation with nonlocal singular free energies.
\newblock {\em Ann. Mat. Pura Appl. (4)}, 194(4):1071--1106, 2015.

\bibitem{abels-gar-grun}
H.~Abels, H.~Garcke, and G.~Gr\"{u}n.
\newblock Thermodynamically consistent, frame indifferent diffuse interface
  models for incompressible two-phase flows with different densities.
\newblock {\em Math. Models Methods Appl. Sci.}, 22(3):1150013, 40, 2012.

\bibitem{barbu-monot}
V.~Barbu.
\newblock {\em Nonlinear differential equations of monotone types in {B}anach
  spaces}.
\newblock Springer Monographs in Mathematics. Springer, New York, 2010.

\bibitem{bat-han-NLCH}
P.~W. Bates and J.~Han.
\newblock The {N}eumann boundary problem for a nonlocal {C}ahn-{H}illiard
  equation.
\newblock {\em J. Differential Equations}, 212(2):235--277, 2005.

\bibitem{bcst1}
E.~Bonetti, P.~Colli, L.~Scarpa, and G.~Tomassetti.
\newblock A doubly nonlinear {C}ahn-{H}illiard system with nonlinear viscosity.
\newblock {\em Commun. Pure Appl. Anal.}, 17(3):1001--1022, 2018.

\bibitem{BBM}
J.~Bourgain, H.~Brezis, and P.~Mironescu.
\newblock Another look at {S}obolev spaces.
\newblock In {\em Optimal control and partial differential equations}, pages
  439--455. IOS, Amsterdam, 2001.

\bibitem{BBM2}
J.~Bourgain, H.~Brezis, and P.~Mironescu.
\newblock Limiting embedding theorems for {$W^{s,p}$} when {$s\uparrow1$} and
  applications.
\newblock {\em J. Anal. Math.}, 87:77--101, 2002.
\newblock Dedicated to the memory of Thomas H. Wolff.

\bibitem{CH}
J.~Cahn and J.~Hilliard.
\newblock Free energy of a nonuniform system. {I.} {Interfacial} free energy.
\newblock {\em J. Chem. Phys.}, 28(2):258--267, 1958.

\bibitem{can-matt-nab-ARMA}
C.~Canc\`es, D.~Matthes, and F.~Nabet.
\newblock A two-phase two-fluxes degenerate {C}ahn-{H}illiard model as
  constrained {W}asserstein gradient flow.
\newblock {\em Arch. Ration. Mech. Anal.}, 233(2):837--866, 2019.

\bibitem{cher-gat-mir}
L.~Cherfils, S.~Gatti, and A.~Miranville.
\newblock A variational approach to a {C}ahn-{H}illiard model in a domain with
  nonpermeable walls.
\newblock {\em J. Math. Sci. (N.Y.)}, 189(4):604--636, 2013.
\newblock Problems in mathematical analysis. No. 69.

\bibitem{cher-mir-zel}
L.~Cherfils, A.~Miranville, and S.~Zelik.
\newblock The {C}ahn-{H}illiard equation with logarithmic potentials.
\newblock {\em Milan J. Math.}, 79(2):561--596, 2011.

\bibitem{cher-pet}
L.~Cherfils and M.~Petcu.
\newblock A numerical analysis of the {C}ahn-{H}illiard equation with
  non-permeable walls.
\newblock {\em Numer. Math.}, 128(3):517--549, 2014.

\bibitem{col-far-hass-gil-spr}
P.~Colli, M.~H. Farshbaf-Shaker, G.~Gilardi, and J.~Sprekels.
\newblock Optimal boundary control of a viscous {C}ahn-{H}illiard system with
  dynamic boundary condition and double obstacle potentials.
\newblock {\em SIAM J. Control Optim.}, 53(4):2696--2721, 2015.

\bibitem{colli-fuk-CHmass}
P.~Colli and T.~Fukao.
\newblock Cahn-{H}illiard equation with dynamic boundary conditions and mass
  constraint on the boundary.
\newblock {\em J. Math. Anal. Appl.}, 429(2):1190--1213, 2015.

\bibitem{col-fuk-eqCH}
P.~Colli and T.~Fukao.
\newblock Equation and dynamic boundary condition of {C}ahn-{H}illiard type
  with singular potentials.
\newblock {\em Nonlinear Anal.}, 127:413--433, 2015.

\bibitem{col-fuk-diffusion}
P.~Colli and T.~Fukao.
\newblock Nonlinear diffusion equations as asymptotic limits of
  {C}ahn-{H}illiard systems.
\newblock {\em J. Differential Equations}, 260(9):6930--6959, 2016.

\bibitem{col-gil-spr}
P.~Colli, G.~Gilardi, and J.~Sprekels.
\newblock On the {C}ahn-{H}illiard equation with dynamic boundary conditions
  and a dominating boundary potential.
\newblock {\em J. Math. Anal. Appl.}, 419(2):972--994, 2014.

\bibitem{col-gil-spr-contr}
P.~Colli, G.~Gilardi, and J.~Sprekels.
\newblock A boundary control problem for the pure {C}ahn-{H}illiard equation
  with dynamic boundary conditions.
\newblock {\em Adv. Nonlinear Anal.}, 4(4):311--325, 2015.

\bibitem{col-gil-spr-contr2}
P.~Colli, G.~Gilardi, and J.~Sprekels.
\newblock A boundary control problem for the viscous {C}ahn-{H}illiard equation
  with dynamic boundary conditions.
\newblock {\em Appl. Math. Optim.}, 73(2):195--225, 2016.

\bibitem{col-gil-spr-OCNLphase}
P.~Colli, G.~Gilardi, and J.~Sprekels.
\newblock Distributed optimal control of a nonstandard nonlocal phase field
  system with double obstacle potential.
\newblock {\em Evol. Equ. Control Theory}, 6(1):35--58, 2017.

\bibitem{col-scar}
P.~Colli and L.~Scarpa.
\newblock From the viscous {C}ahn-{H}illiard equation to a regularized
  forward-backward parabolic equation.
\newblock {\em Asymptot. Anal.}, 99(3-4):183--205, 2016.

\bibitem{CS17}
P.~Colli and J.~Sprekels.
\newblock Optimal boundary control of a nonstandard {C}ahn-{H}illiard system
  with dynamic boundary condition and double obstacle inclusions.
\newblock In {\em Solvability, regularity, and optimal control of boundary
  value problems for {PDE}s}, volume~22 of {\em Springer INdAM Ser.}, pages
  151--182. Springer, Cham, 2017.

\bibitem{DRST}
E.~{Davoli}, H.~{Ranetbauer}, L.~{Scarpa}, and L.~{Trussardi}.
\newblock {Degenerate nonlocal Cahn-Hilliard equations: well-posedness,
  regularity and local asymptotics}.
\newblock {\em arXiv e-prints}, page arXiv:1902.04469, Feb 2019.

\bibitem{porta-grass}
F.~Della~Porta and M.~Grasselli.
\newblock Convective nonlocal {C}ahn-{H}illiard equations with reaction terms.
\newblock {\em Discrete Contin. Dyn. Syst. Ser. B}, 20(5):1529--1553, 2015.

\bibitem{eb-knopf}
M.~Ebenbeck and P.~Knopf.
\newblock Optimal medication for tumors modeled by a
  {C}ahn--{H}illiard--{B}rinkman equation.
\newblock {\em Calc. Var. Partial Differential Equations}, 58(4), 2019.

\bibitem{frig-lam-roc}
S.~Frigeri, K.~F. Lam, and E.~Rocca.
\newblock On a diffuse interface model for tumour growth with non-local
  interactions and degenerate mobilities.
\newblock In {\em Solvability, regularity, and optimal control of boundary
  value problems for {PDE}s}, volume~22 of {\em Springer INdAM Ser.}, pages
  217--254. Springer, Cham, 2017.

\bibitem{gal-gior-grass-NLCH}
C.~G. Gal, A.~Giorgini, and M.~Grasselli.
\newblock The nonlocal {C}ahn-{H}illiard equation with singular potential:
  well-posedness, regularity and strict separation property.
\newblock {\em J. Differential Equations}, 263(9):5253--5297, 2017.

\bibitem{gal-grass-NLCH}
C.~G. Gal and M.~Grasselli.
\newblock Longtime behavior of nonlocal {C}ahn-{H}illiard equations.
\newblock {\em Discrete Contin. Dyn. Syst.}, 34(1):145--179, 2014.

\bibitem{gal-grass-mir}
C.~G. Gal, M.~Grasselli, and A.~Miranville.
\newblock Cahn-{H}illiard-{N}avier-{S}tokes systems with moving contact lines.
\newblock {\em Calc. Var. Partial Differential Equations}, 55(3):Art. 50, 47,
  2016.

\bibitem{gal-grass-wu-ARMA}
C.~G. Gal, M.~Grasselli, and H.~Wu.
\newblock Global {W}eak {S}olutions to a {D}iffuse {I}nterface {M}odel for
  {I}ncompressible {T}wo-{P}hase {F}lows with {M}oving {C}ontact {L}ines and
  {D}ifferent {D}ensities.
\newblock {\em Arch. Ration. Mech. Anal.}, 234(1):1--56, 2019.

\bibitem{garcke-lam2}
H.~Garcke and K.~F. Lam.
\newblock Analysis of a {C}ahn-{H}illiard system with non-zero {D}irichlet
  conditions modeling tumor growth with chemotaxis.
\newblock {\em Discrete Contin. Dyn. Syst.}, 37(8):4277--4308, 2017.

\bibitem{garcke-lam}
H.~Garcke and K.~F. Lam.
\newblock Well-posedness of a {C}ahn-{H}illiard system modelling tumour growth
  with chemotaxis and active transport.
\newblock {\em European J. Appl. Math.}, 28(2):284--316, 2017.

\bibitem{garcke-et-al}
H.~Garcke, K.~F. Lam, R.~N\"{u}rnberg, and E.~Sitka.
\newblock A multiphase {C}ahn-{H}illiard-{D}arcy model for tumour growth with
  necrosis.
\newblock {\em Math. Models Methods Appl. Sci.}, 28(3):525--577, 2018.

\bibitem{garcke-lam-roc}
H.~Garcke, K.~F. Lam, and E.~Rocca.
\newblock Optimal control of treatment time in a diffuse interface model of
  tumor growth.
\newblock {\em Appl. Math. Optim.}, 2017.

\bibitem{garcke}
H.~Garcke, K.~F. Lam, E.~Sitka, and V.~Styles.
\newblock A {C}ahn-{H}illiard-{D}arcy model for tumour growth with chemotaxis
  and active transport.
\newblock {\em Math. Models Methods in Appl. Sci.}, 26(06):1095--1148, 2016.

\bibitem{GL}
G.~Giacomin and J.~Lebowitz.
\newblock Phase segregation dynamics in particle systems with long range
  interactions. {I.} {Macroscopic} limits.
\newblock {\em J. Stat. Phys}, 87(1):37--61, 1997.

\bibitem{gil-mir-sch}
G.~Gilardi, A.~Miranville, and G.~Schimperna.
\newblock On the {C}ahn-{H}illiard equation with irregular potentials and
  dynamic boundary conditions.
\newblock {\em Commun. Pure Appl. Anal.}, 8(3):881--912, 2009.

\bibitem{gil-mir-sch-longtime}
G.~Gilardi, A.~Miranville, and G.~Schimperna.
\newblock Long time behavior of the {C}ahn-{H}illiard equation with irregular
  potentials and dynamic boundary conditions.
\newblock {\em Chin. Ann. Math. Ser. B}, 31(5):679--712, 2010.

\bibitem{han-NLCH}
J.~Han.
\newblock The {C}auchy problem and steady state solutions for a nonlocal
  {C}ahn-{H}illiard equation.
\newblock {\em Electron. J. Differential Equations}, pages No. 113, 9, 2004.

\bibitem{hinter-weg}
M.~Hinterm\"uller and D.~Wegner.
\newblock Distributed optimal control of the {C}ahn-{H}illiard system including
  the case of a double-obstacle homogeneous free energy density.
\newblock {\em SIAM J. Control Optim.}, 50(1):388--418, 2012.

\bibitem{brezzi-gilardi}
H.~Kardestuncer and D.~H. Norrie, editors.
\newblock {\em {\em Chapters 1--3 in} Finite element handbook}.
\newblock McGraw-Hill Book Co., New York, 1987.

\bibitem{lin}
F.~H. Lin.
\newblock Some dynamical properties of {G}inzburg-{L}andau vortices.
\newblock {\em Comm. Pure Appl. Math.}, 49(4):323--359, 1996.

\bibitem{liu-wu-ARMA}
C.~Liu and H.~Wu.
\newblock An {E}nergetic {V}ariational {A}pproach for the {C}ahn--{H}illiard
  {E}quation with {D}ynamic {B}oundary {C}ondition: {M}odel {D}erivation and
  {M}athematical {A}nalysis.
\newblock {\em Arch. Ration. Mech. Anal.}, 233(1):167--247, 2019.

\bibitem{maier-stan1}
S.~Maier-Paape and T.~Wanner.
\newblock Spinodal decomposition for the {C}ahn-{H}illiard equation in higher
  dimensions: nonlinear dynamics.
\newblock {\em Arch. Ration. Mech. Anal.}, 151(3):187--219, 2000.

\bibitem{MS}
V.~Maz\cprime~ya and T.~Shaposhnikova.
\newblock On the {B}ourgain, {B}rezis, and {M}ironescu theorem concerning
  limiting embeddings of fractional {S}obolev spaces.
\newblock {\em J. Funct. Anal.}, 195(2):230--238, 2002.

\bibitem{MS2}
V.~Maz\cprime~ya and T.~Shaposhnikova.
\newblock Erratum to: ``{O}n the {B}ourgain, {B}rezis and {M}ironescu theorem
  concerning limiting embeddings of fractional {S}obolev spaces'' [{J}.
  {F}unct. {A}nal. {\bf 195} (2002), no. 2, 230--238; {MR}1940355
  (2003j:46051)].
\newblock {\em J. Funct. Anal.}, 201(1):298--300, 2003.

\bibitem{MRT18}
S.~Melchionna, H.~Ranetbauer, L.~Scarpa, and L.~Trussardi.
\newblock From nonlocal to local {C}ahn-{H}illiard equation.
\newblock {\em Preprint arXiv:1803.09729}, 2019.

\bibitem{mel-roc}
S.~Melchionna and E.~Rocca.
\newblock On a nonlocal {C}ahn-{H}illiard equation with a reaction term.
\newblock {\em Adv. Math. Sci. Appl.}, 24(2):461--497, 2014.

\bibitem{mir-sch}
A.~Miranville and G.~Schimperna.
\newblock On a doubly nonlinear {C}ahn-{H}illiard-{G}urtin system.
\newblock {\em Discrete Contin. Dyn. Syst. Ser. B}, 14(2):675--697, 2010.

\bibitem{novick-cohen}
A.~Novick-Cohen.
\newblock On the viscous {C}ahn-{H}illiard equation.
\newblock In {\em Material instabilities in continuum mechanics ({E}dinburgh,
  1985--1986)}, Oxford Sci. Publ., pages 329--342. Oxford Univ. Press, New
  York, 1988.

\bibitem{ponce04}
A.~C. Ponce.
\newblock An estimate in the spirit of {P}oincar{\'e}'s inequality.
\newblock {\em Journal of the European Mathematical Society}, 6(1):1--15, 2004.

\bibitem{ponce}
A.~C. Ponce.
\newblock A new approach to {S}obolev spaces and connections to
  {$\Gamma$}-convergence.
\newblock {\em Calc. Var. Partial Differential Equations}, 19(3):229--255,
  2004.

\bibitem{sand-serf-2}
E.~Sandier and S.~Serfaty.
\newblock Limiting vorticities for the {G}inzburg-{L}andau equations.
\newblock {\em Duke Math. J.}, 117(3):403--446, 2003.

\bibitem{sand-serf}
E.~Sandier and S.~Serfaty.
\newblock Gamma-convergence of gradient flows with applications to
  {G}inzburg-{L}andau.
\newblock {\em Comm. Pure Appl. Math.}, 57(12):1627--1672, 2004.

\bibitem{scar-VCHDBC}
L.~Scarpa.
\newblock Existence and uniqueness of solutions to singular {C}ahn-{H}illiard
  equations with nonlinear viscosity terms and dynamic boundary conditions.
\newblock {\em J. Math. Anal. Appl.}, 469(2):730 -- 764, 2019.

\bibitem{serf}
S.~Serfaty.
\newblock Stable configurations in superconductivity: uniqueness, multiplicity,
  and vortex-nucleation.
\newblock {\em Arch. Ration. Mech. Anal.}, 149(4):329--365, 1999.

\end{thebibliography}
\end{document}